\documentclass[11pt,noamsfonts]{amsart}
\usepackage{unicode-math, aliascnt}
\usepackage{microtype}
\usepackage[colorlinks=true, linkcolor=blue, citecolor=blue, urlcolor=blue]{hyperref}
\hypersetup{
  pdftitle={Classification of strictly resistance nonnegative graphs},
  pdfauthor={Hailey Jay Garcia},
  pdfsubject={Combinatorics: resistance curvature, toughness, and polyhedral combinatorics of graphs},
  pdfkeywords={resistance curvature, toughness, spanning-tree polytope,
    effective resistance, fractional matching, implicit equality, bipartite graph},
}
\usepackage[capitalise,nameinlink,sort]{cleveref}
\usepackage{enumitem}
\AtBeginDocument{}

\crefname{equation}{Equation}{Equations}
\Crefname{equation}{Equation}{Equations}
\creflabelformat{equation}{#2#1#3}

\newcommand{\R}{\mathbb{R}}
\newcommand{\cT}{\mathcal{T}}
\newcommand{\Th}{\Theta}
\newcommand{\FM}{\mathit{FM}}
\newcommand{\iv}{\mathbf{1}}
\newcommand{\RN}{\textup{RN\@}}
\newcommand{\RP}{\textup{RP\@}}

\newcommand{\newsharedtheorem}[3]{%
  \newaliascnt{#1}{theorem}%
  \newtheorem{#1}[#1]{#2}%
  \aliascntresetthe{#1}%
  \crefname{#1}{#2}{#3}%
  \Crefname{#1}{#2}{#3}%
}

\newtheorem{theorem}{Theorem}[section]
\numberwithin{equation}{section}

\renewcommand{\themainthm}{\Alph{mainthm}}
\crefname{mainthm}{Theorem}{Theorems}
\Crefname{mainthm}{Theorem}{Theorems}
\newsharedtheorem{lemma}{Lemma}{Lemmas}
\newsharedtheorem{question}{Question}{Questions}
\theoremstyle{definition}
\newtheorem*{remark}{Remark}
\newsharedtheorem{example}{Example}{Examples}

\usepackage{thm-restate}

\title[Strictly resistance nonnegative graphs]{Classification of strictly resistance nonnegative graphs}

\author[H. J. Garcia]{Hailey Jay Garcia}
\address{Louisiana State University}
\email{jgarc86@lsu.edu}

\date{\today}

\subjclass[2020]{Primary 05C75; Secondary 05C05, 05C42, 52B05, 52B40, 90C05}
\keywords{resistance curvature, toughness, spanning-tree polytope,
  effective resistance, fractional matching, implicit equality,
  bipartite graph}

\begin{document}

\begin{abstract}
We say that a graph is resistance nonnegative or \RN{} if it admits a positive edge-weight that yields nonnegative resistance curvature in the sense of Devriendt and Lambiotte.
Analogously, a graph may be resistance positive or \RP{}; we say a graph is strictly \RN{} if it is \RN{} but not \RP{}.

In this paper, we show that every $2$-connected strictly \RN{} graph is bipartite with parts whose sizes differ by one, demonstrating that there are no $1$-tough strictly \RN{} graphs.
As a consequence, we prove that every $1$-tough \RN{} graph is also \RP{}.
Lastly, we quantify the exact toughness values an \RN{} graph can attain below $1$.
\end{abstract}

\maketitle

\section{Introduction}
\label{sec:intro}

For a finite, simple, connected graph, the shortest path length between two vertices is an elegant notion of distance, but has drawbacks.
For example, consider a graph where the edge $uw$ is the only path between the vertices $u$ and $w$.
Cutting that edge would separate those vertices and the graph.
However, in a graph with many paths between $u$ and $w$, the edge $uw$ would, in some sense, be ``less important'' in connecting $u$ and $w$.
In either case, the distance between them is $1$ and tells us nothing of the global structure of the graph.
Another way to measure how well the edge $uw$ connects its two endpoints comes from electrical network theory~\cite{kirchhoff1847,doylesnell1984}.
Given a positive edge-weight, or conductance, on each edge, one views the graph as an electrical network.
The \emph{effective resistance} between two vertices $u$ and $w$ is the potential difference between them when one unit of current enters the network at $u$ and leaves it at $w$.
When $uw$ is an edge, we take its endpoints as the terminals and call the resulting quantity the effective resistance \emph{of the edge} $uw$.
This is a global notion, which depends on all paths connecting $u$ and $w$ rather than just $uw$.

This resistance gives rise to a discrete geometry and lets one define the \emph{resistance curvature} at each vertex~\cite{devriendtlambiotte2022}.
A graph is \emph{resistance nonnegative}, or \RN{}, if some positive edge-weighting makes the resistance curvature nonnegative at every vertex, and \emph{resistance positive}, or \RP{}, if some positive edge-weighting makes it positive at every vertex.
That \RP{} graphs are also \RN{} is immediate.
Graphs that are \RN{} but not \RP{} are called \emph{strictly resistance nonnegative}.

Determining which graphs are \RN{}, strictly so, or \RP{} is related to the \emph{toughness} $\tau$ of a graph~\cite{chvatal1973}.
By a theorem of Fiedler~\cite[Theorem~3.4.18]{fiedler2011} as applied by Devriendt~\cite[Theorem~1.3]{devriendt2026}, all \RP{} graphs are $1$-tough.
A bipartite \RN{} graph has parts whose sizes differ by at most one; when they do differ, the graph is not $1$-tough and hence strictly \RN{}.
The converse has not been addressed.

To fully investigate this phenomenon, we require more machinery.
Many proofs in this paper rely on the \emph{tree-double-matching polytope} $\Th$ associated to $G$, the intersection of its spanning-tree polytope with the double of its fractional matching polytope.
Each vertex $v$ carries the inequality $x(E(v)) \le 2$ on $\Th$, and we call $v$ \emph{slackless} if that inequality is tight at every point of $\Th$, writing $U$ for the set of slackless vertices.
A strict degree inequality at $v$ is positive curvature at $v$, so no edge-weight making every curvature nonnegative can give a slackless vertex positive curvature; hence, a graph with $U\neq \emptyset$ is not \RP{}.
We formalise this observation in \Cref{thm:implicit} and use the set $U$ to show the following.

\begin{restatable}{mainthm}{thmchar}
\label{thm:char}
For $2$-connected $G$ on $n\ge 3$ vertices, the following are equivalent:
\begin{enumerate}[label=(\alph*), ref=\themainthm(\alph*)]
  \item\label[mainthm]{thm:char-strict} $G$ is strictly \RN{};
  \item\label[mainthm]{thm:char-slackless} $G$ is \RN{} and has a slackless vertex;
  \item\label[mainthm]{thm:char-bip} $G$ is \RN{} and bipartite with parts of sizes $k$ and $k+1$ for some $k \ge 2$.
\end{enumerate}
In that case $n = 2k+1$ is odd, $U$ is the smaller part, and
\[
  \tau(G) = \frac{n-1}{n+1} = \frac{k}{k+1} < 1,
\]
and $U$ is the only cut-set attaining $\tau(G)$.
\end{restatable}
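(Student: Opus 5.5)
The plan is to close a cycle of implications (a)$\Rightarrow$(b)$\Rightarrow$(c)$\Rightarrow$(a). The toughness statements then come from the structure found in (b)$\Rightarrow$(c). Throughout, we work with the tree-double-matching polytope $\Th$ and the characterisation of \RN{} and \RP{} given in \Cref{thm:implicit}. Concretely, $G$ is \RN{} iff $\Th\neq\emptyset$, and it is \RP{} iff $\Th$ contains a point at which every inequality $x(E(v))\le 2$ is strict. Since $\Th$ is convex, having such a point for each $v$ separately is enough: average them. So \RP{} is equivalent to \RN{} together with $U=\emptyset$. This gives (a)$\Leftrightarrow$(b) immediately.

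For (c)$\Rightarrow$(a), let the parts be $A$ and $B$ with $|A|=k$ and $|B|=k+1$. Deleting $A$ leaves $k+1$ isolated vertices, so $\tau(G)\le k/(k+1)<1$. Since \RP{} graphs are $1$-tough (Fiedler/Devriendt), $G$ is not \RP{}, and being \RN{} it is strictly \RN{}.

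The main step is (b)$\Rightarrow$(c). Take $x\in\Th$ in the relative interior, so that the set of tight constraints at $x$ is exactly the set of implicit equalities. Then $x(E)=n-1$, every $x_e>0$ is forced where possible, and $x(E(u))=2$ for $u\in U$. Summing degrees, $2(n-1)=\sum_v x(E(v))$, while $x(E(v))\le 2$ for all $v$. I would use LP duality (Farkas) on the implicit equality $x(E(u))\le 2$. It says that $2-x(E(u))\ge 0$ is implied as a nonnegative combination of the other constraints. For the spanning-tree polytope these are the rank constraints $x(E[S])\le |S|-1$, the total $x(E)=n-1$, and the degree constraints. The expected outcome is a certificate of the form
\[
x(E)=n-1,\quad x(E[V\setminus U])\le |V\setminus U|-c(G-U),\quad x(E(U))\le 2|U|,
\]
which combine to $n-1\le 2|U|+(n-|U|)-c(G-U)$, where $c(G-U)$ is the number of components of $G-U$. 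Tightness of this chain then forces $c(G-U)=|U|+1$, $E[V\setminus U]=\emptyset$ in the support, and $E[U]$ carrying no weight. Showing that the implicit equalities organise into exactly this cut structure is the hardest part. I would first try uncrossing the tight rank sets, using submodularity of the tight-set family, to reduce to a single set $S=V\setminus U$ whose tight sub-components are singletons. With $2$-connectivity this should give that every component of $G-U$ is a single vertex, i.e. $V\setminus U$ is independent. Next, $U$ must be independent: an edge inside $U$ would get weight $0$ at every point of $\Th$. I would then need to argue either that such an edge could be perturbed into positive weight, or that such edges are excluded after fixing the weight, giving bipartiteness with parts $U$ and $V\setminus U$ of sizes $k$ and $k+1$. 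Finally, $k\ge 2$ because $n\ge 3$ and $G$ is $2$-connected: with $k=1$, $G$ would be the path $K_{1,2}$, which is not $2$-connected.

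For the toughness formula we already have $\tau\le k/(k+1)$. For the reverse bound, take any cut-set $S$. Restricting the point $x$ to the edges between $S$ and the components of $G-S$ gives a doubled fractional matching on a forest-like structure, and counting as above yields $c(G-S)\le |S|+1$. A $2$-connected bipartite graph with these part sizes should then give $c(G-S)/|S|\le (k+1)/k$, with equality only when $S=U$. I would obtain this from a Hall-type argument using that $G-u$ has a fractional perfect matching structure for $u\in U$, which is again read off from the relative-interior point $x$. This uniqueness claim is the second place where I would expect to spend care.
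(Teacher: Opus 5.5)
Your (a)$\Leftrightarrow$(b), which is \Cref{thm:implicit-rp}, and your (c)$\Rightarrow$(a) via Fiedler's theorem are fine. The core implication (b)$\Rightarrow$(c), however, is not proved. Your chain only gives the upper bound $c(G-U)\le |U|+1-x(E[U])$. That it is tight, i.e.\ $c(G-U)\ge |U|+1$, is the entire content. It cannot follow from $x(E(u))=2$ on $U$ alone: in $K_{2,3}$ with smaller part $\{a_1,a_2\}$, the vertex $a_1$ is slackless but $c(G-a_1)=1$. So the argument must use that every vertex outside $U$ has slack somewhere on $\Th$, and your chain never does. Your plan to uncross tight rank sets is also vacuous. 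For $2$-connected \RN{} $G$, \Cref{thm:implicit-rn} says no edge or rank inequality is an implicit equality, so a relative-interior point of $\Th$ has no tight rank sets. Moreover, your paraphrase ``$G$ is \RN{} iff $\Th\neq\emptyset$'' is false, and under it the implication you want fails: $K_{2,3}+uw$ is $2$-connected with $\Th\neq\emptyset$ and slackless set $U=\{u,w\}$, which is not independent. \RN{} has to be used in the form $P^{\circ}\cap 2\FM\neq\emptyset$. That is exactly what settles your hesitation about edges inside $U$: an edge with $x(e)=0$ throughout $\Th$ is impossible once $\Th$ meets $P^{\circ}$, where every edge is positive.

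The missing idea is complementary slackness combined with \Cref{thm:implicit-rn}. Apply \Cref{lem:farkas} to $f=\sum_{v\in U}(x(E(v))-2)$, which vanishes on $\Th$. Multipliers of non-implicit constraints must then be $0$, so all edge and rank multipliers vanish, as do the degree multipliers off $U$. What remains is the identity $\sum_{v\in U}a_v(2-x(E(v)))=\mu((n-1)-x(E))$ with $a_v\ge 1$, and comparing coefficients of $x(e)$ gives $\sum_{v\in e\cap U}a_v=\mu$ for every edge. The rest is combinatorial:
\begin{itemize}
  \item $\mu\ge 1$, so $U$ is a vertex cover.
  \item A vertex of $U$ with neighbours both in $U$ and outside $U$ would force some $a_{u'}=0$. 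Together with connectivity and $V\setminus U\neq\emptyset$ (from the count equality), this makes $U$ independent.
  \item Then $a_v=\mu$, and the constant terms give $|U|=(n-1)/2$.
\end{itemize}

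Your toughness step has a similar hole. The bound $c(G-S)\le |S|+1$ is fine, but you must still rule out $c(G-S)=|S|+1$ with $|S|<k$ and show $S=U$. The Hall-type route is unsupported: for $u\in U$, $G-u$ has parts of sizes $k-1$ and $k+1$, so it has no fractional perfect matching. The paper's \Cref{thm:extremal} handles this. In the extremal case the cut-set chain is tight on all of $\Th$, forcing $x(E[S])\equiv 0$ and rank equalities on the non-singleton components of $G-S$, both excluded by \Cref{thm:implicit-rn}. Hence $S$ and $V\setminus S$ are independent and $n=2|S|+1$, and uniqueness of the bipartition gives $S=U$.
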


The only \RN{} graphs on $n \ge 3$ vertices that are not $2$-connected are the paths $P_n$~\cite[Proposition~3.7]{devriendt2026}, which have toughness $\tau=1/2$.
Hence, strictly \RN{} graphs have a specific structure, and ruling out this structure is all that is required to identify an \RN{} graph as \RP{}.
The \RN{} graphs with toughness below $1$ are therefore exactly the strictly \RN{} graphs; with the theorem of Fiedler above, this gives the following.

\begin{restatable}{mainthm}{thmband}
\label{thm:band}
For connected $G$ on $n \ge 3$ vertices, $G$ is \RP{} if and only if $G$ is \RN{} and $1$-tough.
\end{restatable}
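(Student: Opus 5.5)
The plan is to deduce \Cref{thm:band} from \Cref{thm:char}, together with two facts quoted in the introduction. The first is Fiedler's theorem as applied by Devriendt: every \RP{} graph is $1$-tough. The second is the classification of non-$2$-connected \RN{} graphs: on $n\ge 3$ vertices, the only such graphs are the paths $P_n$.

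For the forward direction, suppose $G$ is \RP{}. Then $G$ is \RN{}, since positive curvature is in particular nonnegative. It is also $1$-tough by Fiedler's theorem in Devriendt's form. Nothing further is needed here.

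For the converse, suppose $G$ is \RN{} and $1$-tough, and split into cases according to $2$-connectivity.

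\emph{$G$ is not $2$-connected.} Since $G$ is connected and \RN{} with $n\ge 3$, the classification forces $G=P_n$. Removing an interior vertex of $P_n$ leaves two components, so $\tau(P_n)\le 1/2<1$. This contradicts $1$-toughness, so this case cannot occur. Alternatively, one can argue directly: a cut vertex $v$ splits $G$ into at least two components, so $\tau(G)\le 1/2$. This alternative avoids the path classification entirely, since a connected graph with a cut vertex is never $1$-tough.

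\emph{$G$ is $2$-connected.} Suppose, for contradiction, that $G$ is not \RP{}. Then $G$ is strictly \RN{}. By \Cref{thm:char} (the equivalence of (a) with (c), together with the toughness formula), $\tau(G)=k/(k+1)<1$. This contradicts $1$-toughness, so $G$ is \RP{}.

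The only subtlety is the convention for toughness of complete graphs, where there are no cut-sets. Complete graphs are $2$-connected for $n\ge 3$ and are handled inside the second case. Since $K_n$ is not a bipartite graph with parts of sizes $k$ and $k+1$, $k\ge2$, it is not strictly \RN{} by \Cref{thm:char}. Hence it is \RP{} whenever it is \RN{}, regardless of the convention. So there is no real obstacle beyond invoking \Cref{thm:char} correctly. All the difficulty of the statement is absorbed into that theorem, specifically the implication from a slackless vertex to the bipartite structure.
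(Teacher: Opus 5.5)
Your proposal is correct and follows the paper's proof. The forward direction goes through Fiedler--Devriendt, and the converse reduces to the $2$-connected case and uses the toughness formula $\tau = k/(k+1) < 1$ from \Cref{thm:char}. The only difference is cosmetic: you rule out a cut vertex directly via $\tau \le 1/2$, where the paper cites Chv\'atal's observation that $1$-tough graphs on $n \ge 3$ vertices are $2$-connected, which is the same argument.
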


Neither hypothesis is redundant.
Every \RP{} graph is $1$-tough but not conversely: Agrahari et al.~\cite[Theorem~4]{agraharietal2026} construct, for every $n \ge 11$, a $1$-tough graph on $n$ vertices that is not \RN{}.

In a recent preprint, Wang~\cite[Corollary~5]{wang2026} showed that every $10$-tough graph is \RP{} using fractional Hamiltonicity, settling a conjecture by Devriendt~\cite[Conjecture~6.4]{devriendt2026}.
In another recent preprint, Huang~\cite[Theorem~1.4]{huang2026} claims a result that, combined with~\cite[Theorem~1]{wang2026}, would reduce the bound to $5$.
By \Cref{thm:band}, for $t\ge 1$, every $t$-tough graph is \RN{} if and only if every $t$-tough graph is \RP{}.
Wang~\cite[Theorem~6]{wang2026} constructs, for every $\varepsilon > 0$, a graph that is not \RN{} with $\tau > 3/2 - \varepsilon$.
Write $t^*_{\RN{}} = \inf\{t : \text{every } t\text{-tough graph is \RN{}}\}$ and $t^*_{\RP{}}$ analogously.
Wang's results~\cite{wang2026} give $3/2\le t^*_{\RN{}}\le t^*_{\RP{}}\le 10$.
Since $t^*_{\RN{}}\ge 3/2>1$, the two infima are taken over the same set, and \Cref{thm:band} strengthens the central inequality to $t^*_{\RN{}}=t^*_{\RP{}}$.
Combining these results on $1$-tough \RN{} graphs with the paths $P_n$ and the family $K_{k,k+1}$ constructed in~\cite[Example~3.4]{devriendt2026}, we determine the exact toughness that \RN{} graphs attain below $1$.

\begin{restatable}{mainthm}{thmtough}
\label{thm:tough}
The toughness values below $1$ attained by connected \RN{} graphs are exactly $k/(k+1)$ for $k\ge 1$.
In particular, a $2$-connected \RN{} graph on $n$ vertices has either $\tau \ge 1$, or $n$ odd and $\tau = (n-1)/(n+1)$; and for every $k \ge 2$ the complete bipartite graph $K_{k,k+1}$ is $2$-connected and strictly \RN{} with $\tau = k/(k+1)$.
\end{restatable}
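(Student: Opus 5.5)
We derive \Cref{thm:tough} from \Cref{thm:char}, \Cref{thm:band}, and the classification of non-$2$-connected \RN{} graphs. Let $G$ be a connected \RN{} graph with $\tau(G)<1$. The case $n\le 2$ is degenerate: $K_1$ and $K_2$ are conventionally assigned infinite toughness or are excluded. So assume $n\ge 3$ and split according to $2$-connectivity.

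If $G$ is not $2$-connected, then by \cite[Proposition~3.7]{devriendt2026} it is a path $P_n$. Removing the interior vertices in alternate positions shows $\tau(P_n)=1/2$ for $n\ge 3$, which is the value $k/(k+1)$ with $k=1$.

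If $G$ is $2$-connected, then $\tau(G)<1$ means $G$ is not $1$-tough. By \Cref{thm:band}, $G$ is not \RP{}, so $G$ is strictly \RN{}. \Cref{thm:char} then says $G$ is bipartite with parts of sizes $k$ and $k+1$ for some $k\ge 2$, that $n=2k+1$, and that $\tau(G)=k/(k+1)=(n-1)/(n+1)$. Every toughness value below $1$ is therefore of the form $k/(k+1)$ with $k\ge 1$. This also gives the ``in particular'' dichotomy for $2$-connected graphs: either $\tau\ge 1$, or $n$ is odd and $\tau=(n-1)/(n+1)$.

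For attainment, $k=1$ is realised by $P_3$, or by any $P_n$ with $n\ge 3$. For $k\ge 2$ take $K_{k,k+1}$, which is \RN{} by \cite[Example~3.4]{devriendt2026}. It is $2$-connected because both parts have size at least $2$, so deleting one vertex leaves a complete bipartite graph with nonempty parts. Its parts have sizes $k$ and $k+1$, so \Cref{thm:char-bip} applies and \Cref{thm:char} gives that it is strictly \RN{} with $\tau=k/(k+1)$.

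The toughness value can also be checked directly. Removing the smaller part leaves $k+1$ isolated vertices, giving ratio $k/(k+1)$. Any cut-set $S$ must contain an entire part; otherwise the remainder is complete bipartite with two nonempty parts and hence connected. If $S$ contains the larger part, the ratio is at least $(k+1)/k$. If $S$ contains the smaller part plus $j$ vertices of the larger part, the ratio is $(k+j)/(k+1-j)\ge k/(k+1)$.

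The argument is essentially bookkeeping. The only points needing care are confirming that \Cref{thm:char} applies, i.e.\ that $n\ge 3$ and $G$ is $2$-connected, and handling the small or degenerate graphs under whatever toughness convention the paper adopts.
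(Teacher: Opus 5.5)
Your proposal is correct and follows the paper's proof essentially step for step. The paths $P_n$ handle the non-$2$-connected case, \Cref{thm:band} and \Cref{thm:char} handle the $2$-connected case, and $K_{k,k+1}$ is fed through \Cref{thm:char-bip} for attainment.

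The only real difference is in showing that $K_{k,k+1}$ is \RN{}. You cite \cite[Example~3.4]{devriendt2026}, which is legitimate (the paper's introduction attributes the family to that example). The paper instead proves it in place. The uniform average of spanning-tree indicators lies in $P^{\circ}$ and is constant on edges by edge-transitivity, so the count equality makes it equal to $2/(k+1)$ on every edge. Then $x(E(v))$ is exactly $2$ on the smaller part and $2k/(k+1)$ on the larger part. This keeps the argument self-contained inside the polytope framework of \Cref{lem:polytope}. Your extra direct computation of $\tau(K_{k,k+1})$ is correct but redundant, since \Cref{thm:char} already supplies it.

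One small slip concerns $\tau(P_n)=1/2$. Deleting the alternate interior vertices of $P_n$ gives ratio $s/(s+1)>1/2$ once $n\ge 5$. The witness should be a single interior vertex. The matching lower bound is that deleting $s$ vertices from a path leaves at most $s+1$ components, so every ratio is at least $s/(s+1)\ge 1/2$.
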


\Cref{sec:prelim} begins our discussion of resistance positivity and nonnegativity by collecting the polytope descriptions into \Cref{thm:implicit}.
Next, we will demonstrate certain cut-set inequalities in \Cref{sec:cutsets} and recover necessary conditions for a bipartite graph to be \RN{} or \RP{} as Devriendt did in~\cite[Proposition~3.5]{devriendt2026}.
\Cref{sec:slack} uses our definition of slackless to prove \Cref{thm:extremal}, and then a brief detour into linear programming duality yields \Cref{lem:sat}, which shows that a nonempty slackless set in a $2$-connected \RN{} graph is an independent vertex cover.
\Cref{sec:cons} includes our main results \Cref{thm:char,thm:band,thm:tough} to characterise all strictly \RN{} graphs, the result that \RN{} and $1$-tough is equivalent to \RP{}, and the toughness they are permitted.
\Cref{sec:open} closes with two open questions: which bipartite graphs with parts of sizes $k$ and $k+1$ are \RN{}, and the value of the common toughness threshold $t^*_{\RN{}}=t^*_{\RP{}}$.

\section{Preliminaries}
\label{sec:prelim}

Throughout, $G$ is a simple, finite connected graph on $n$ vertices, $V = V(G)$ and $E = E(G)$ are its vertex and edge sets, and $\cT = \cT(G)$ is its set of spanning trees.
When the graph is clear from context, we drop the argument and write $\cT$, $V$, $E$, and so on.
We take $n\ge 3$ and treat the graphs $K_1$ and $K_2$ separately in \Cref{ex:k1k2}.
For a graph $H$ we write $c(H)$ for its number of components.

For $S \subseteq V$ we write $E[S]$ for the set of edges with both endpoints in $S$ and $E(v)$ for the set of edges meeting $v$.
We write $G[S]$ for the subgraph of $G$ induced by $S$.
For disjoint $A, B \subseteq V$ we write $E(A,B)$ for the set of edges with one endpoint in $A$ and the other in $B$.
For $F \subseteq E$ and $x \in \R^{E}$ we write $x(e)$ for the value of $x$ at the edge $e$.
We extend to subsets naturally by $x(F) = \sum_{e \in F} x(e)$.

Let $\alpha:E\to \R_+$ be a positive edge-weight on $G$.
For an edge $e\in E$, the \emph{effective resistance} of $e$ with respect to $\alpha$ is defined as
\[ \omega_e(\alpha)=\alpha(e)^{-1}\frac{\sum_{T\in \cT,T\owns e}\prod_{f\in T}\alpha(f)}{\sum_{T\in \cT}\prod_{f\in T}\alpha(f)}. \]
From effective resistance, \emph{resistance curvature} in the sense of~\cite{devriendtlambiotte2022} is defined as
\[ p_v(\alpha)=1-\frac{1}{2}\sum_{e\in E(v)}\alpha(e)\omega_e(\alpha). \]
We say that a graph is \emph{resistance nonnegative} or \RN{} if there is an $\alpha$ with $p_v(\alpha)\ge 0$ for all $v\in V$.
Analogously, a graph is \emph{resistance positive} or \RP{} if there is an $\alpha$ with $p_v(\alpha)> 0$ for all $v\in V$.
A graph is \emph{strictly} \RN{} if it is \RN{} but not \RP{}.

The \emph{toughness} $\tau(G)$ of a graph $G$ is
\[
  \tau(G) = \min \left\{ \frac{|S|}{c(G-S)} : S \subseteq V,\ c(G-S) \ge 2 \right\},
\]
the minimum taken over all \emph{cut-sets} of $G$, that is, over all $S \subseteq V$ with $c(G-S) \ge 2$.
We adopt the convention that $\tau(K_n)=\infty$.
A cut-set $S$ \emph{attains} $\tau(G)$ if $|S|/c(G-S) = \tau(G)$.
Following~\cite{chvatal1973}, we say that $G$ is \emph{$1$-tough} if $\tau(G) \ge 1$.

\begin{example}\label{ex:k1k2}
The only connected graphs with fewer than three vertices are $K_1$ and $K_2$, both \RP{}.
Because $K_1$ has no edges, $p_v = 1$ trivially.
Since $K_2$ has one spanning tree, $p_v = 1-1/2 = 1/2$ for any vertex $v$.
Both are complete, so $\tau=\infty$.
\end{example}

We define several polytopes in $\R^E$ associated to $G$.
The \emph{spanning-tree polytope} of $G$ is
\[
  P = \operatorname{conv}\{\iv_T : T \in \cT\} \subseteq \R^{E},
\]
the convex hull of the indicator vectors $\iv_T$ over all spanning trees $T$; it is the graphic case of the \emph{matroid base polytope}~\cite{edmonds1971}.
Let $\FM$ be the \emph{fractional matching polytope} of $G$~\cite[Section~7.5]{lovaszplummer1986}.
Its double is
\begin{multline}\label{eq:2FM}
  2\FM = \{x \in \R^{E} : x(e) \ge 0 \text{ for all } e \in E,\\
  2 - x(E(v)) \ge 0 \text{ for all } v \in V\},
\end{multline}
whose vertices are the basic $2$-matchings of $G$~\cite[Theorem~7.5.1]{lovaszplummer1986}.
We call $2 - x(E(v)) \ge 0$ the \emph{degree} inequality at $v$.
The polytope $\FM$ is a relaxation of the matching polytope $M$ used in~\cite{devriendt2026} and defined in~\cite{edmonds1965}; see the following remark.
In contrast to $P$, the polytope $2\FM$ is full-dimensional; for sufficiently small $\varepsilon > 0$, the point $\varepsilon \iv_E$ satisfies all defining inequalities strictly.
The \emph{tree-double-matching polytope} of $G$ is the intersection $\Th = P \cap 2\FM$~\cite[Definition~4.4]{devriendt2026}.

\begin{remark}
Devriendt's double-matching polytope $2M$~\cite[Section~4]{devriendt2026} is twice the matching polytope $M$, the convex hull of the indicator vectors of matchings~\cite[Section~7.2]{lovaszplummer1986}.
The polytope $2\FM$ contains it.
By Edmonds' matching polytope theorem~\cite{edmonds1965}, $2M$ is defined by nonnegativity, the degree inequalities $x(E(v)) \le 2$, and in addition the \emph{odd-set} inequalities $x(E[S]) \le |S| - 1$ for $S \subseteq V$ with $|S|\ge 3$ odd.
The inequalities are the doubled form of Edmonds' odd-set inequalities $x(E[S]) \le (|S|-1)/2$.
Each such inequality is a rank inequality, or with $S=V$ the inequality form of the count equality, so they are redundant over the spanning-tree polytope and $P\cap 2M = P\cap 2\FM$.

The relaxation matters in the case of the relative interior.
Since both $M$ and $\FM$ are full-dimensional, a nontrivial defining inequality is strict in its relative interior.
When $n$ is odd, the odd-set inequality at $S = V$ holds with equality on all of $P$, so $P \cap 2M^{\circ} = \emptyset$.
Take, for example, the $3$-cycle $K_3$.
Since each vertex has degree $2$ in exactly one of the three spanning trees and degree $1$ in the other two, every $x\in P^{\circ}$ satisfies $x(E(v))<2$.
Thus, $P^{\circ}\cap 2\FM^{\circ}=P^{\circ}\neq \emptyset$.
Using $2\FM$ instead allows \RN{} and \RP{} to take the parallel forms of \Cref{lem:polytope}.
\end{remark}

For $X \subseteq \R^{E}$ we write $\operatorname{aff} X$ for its \emph{affine hull}, the smallest affine subspace containing $X$.
We write $P^{\circ}$ and $2\FM^{\circ}$ for the relative interiors of $P$ and $2\FM$, their interiors within $\operatorname{aff} P$ and $\operatorname{aff} 2\FM$.

\begin{lemma}
\label{lem:tree}
Let $G$ be connected.
\begin{enumerate}[label=(\alph*), ref=\thelemma(\alph*)]
  \item\label[lemma]{lem:tree-desc} The spanning-tree polytope $P$ is the set of points $x \in \R^{E}$ satisfying the \emph{count} equality $x(E) = n-1$, the \emph{edge} inequalities $x(e) \ge 0$ for $e \in E$, and the \emph{rank} inequalities $x(E[S]) \le |S|-1$ for $S \subseteq V$ with $2 \le |S| \le n-1$.
  \item\label[lemma]{lem:tree-interior} A point $x$ lies in $P^{\circ}$ if and only if $x = \sum_{T \in \cT} \lambda_T \iv_T$ for some $\lambda_T > 0$ with $\sum_{T} \lambda_T = 1$.
  In particular, if $x \in P^{\circ}$ and $y \in P$, then $(x+y)/2 \in P^{\circ}$.
  \item\label[lemma]{lem:tree-strict} If $x \in P$ satisfies every edge and rank inequality strictly, then $x \in P^{\circ}$.
  \item\label[lemma]{lem:tree-2conn} If $G$ is $2$-connected, then every $x \in P^{\circ}$ satisfies every edge and rank inequality strictly.
\end{enumerate}
\end{lemma}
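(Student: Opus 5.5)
The plan is to derive each part from the standard facts on convex hulls of finite point sets, together with Edmonds' description of matroid base polytopes~\cite{edmonds1971}, specialised to the cycle matroid of $G$.

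For \ref{lem:tree-desc}, Edmonds' theorem describes the base polytope by $x(E)=r(E)=n-1$, $x\ge 0$, and $x(F)\le r(F)$ for all $F\subseteq E$. Here $r(F)=|V(F)|-c(F)$ is the graphic rank, where $c(F)$ counts the components of the graph $(V(F),F)$. I would show that the rank inequalities over vertex sets imply all of these.
\begin{itemize}[label={}, leftmargin=1em]
  \item Let $V_1,\dots,V_m$ be the vertex sets of those components. Since $x\ge 0$ and $F\cap E[V_i]\subseteq E[V_i]$, we get $x(F)\le \sum_i x(E[V_i])$.
  \item If each $x(E[V_i])\le |V_i|-1$, then $x(F)\le \sum_i(|V_i|-1)=r(F)$.
  \item A set with $|V_i|=1$ contributes nothing. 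The case $V_i=V$ is covered by the count equality.
\end{itemize}
Conversely, every listed inequality is valid on each $\iv_T$, because a forest on $S$ has at most $|S|-1$ edges.

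For \ref{lem:tree-interior}, I will use the general fact for $Q=\operatorname{conv}\{p_1,\dots,p_N\}$.
\begin{itemize}[label={}, leftmargin=1em]
  \item Suppose $x=\sum\lambda_i p_i$ with all $\lambda_i>0$. Any small displacement within $\operatorname{aff} Q$ can be written as $\sum\mu_i p_i$ with $\sum\mu_i=0$ and $\mu$ small, so all weights stay positive. Hence $x$ lies in the relative interior.
  \item Conversely, let $c=\frac1N\sum p_i$ and let $x$ be relatively interior. Then $x'=x+\varepsilon(x-c)\in Q$ for small $\varepsilon>0$.
  \item Therefore $x=\frac{1}{1+\varepsilon}x'+\frac{\varepsilon}{1+\varepsilon}c$, which puts strictly positive weight on every $p_i$.
  \item The addendum follows: averaging a positive combination with a nonnegative one gives a positive combination.
\end{itemize}

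For \ref{lem:tree-strict}, I would first note that $\operatorname{aff} P\subseteq H=\{x: x(E)=n-1\}$. By \ref{lem:tree-desc}, $P$ is cut out inside $H$ by finitely many inequalities. If all of these are strict at $x$, then by continuity a neighbourhood of $x$ in $H$ lies in $P$. In particular, a neighbourhood of $x$ in $\operatorname{aff} P$ lies in $P$, so $x\in P^{\circ}$.

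For \ref{lem:tree-2conn}, I would write $x=\sum\lambda_T\iv_T$ with all $\lambda_T>0$ using \ref{lem:tree-interior}. Every tree satisfies every inequality, so it suffices to exhibit, for each inequality, one spanning tree that satisfies it strictly.
\begin{itemize}[label={}, leftmargin=1em]
  \item \emph{Edge inequalities.} Every edge lies in some spanning tree, namely one obtained by extending $\{e\}$ in the connected graph $G$.
  \item \emph{Rank inequalities, choice of $u$.} Fix $S$ with $2\le|S|\le n-1$. Since $G$ is connected and $S\ne V$, some $u\in S$ has a neighbour $w\notin S$.
  \item \emph{Rank inequalities, the tree.} By $2$-connectivity, $G-u$ is connected. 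Take a spanning tree $T'$ of $G-u$ and set $T=T'+uw$.
  \item \emph{Why it is strict.} In $T$, the vertex $u$ is a leaf whose only edge leaves $S$. So $u$ is isolated in $(S,T\cap E[S])$, and since $|S|\ge2$ this forest is disconnected. Hence $|T\cap E[S]|\le|S|-2$.
\end{itemize}
This gives $x(E[S])<|S|-1$.

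The main obstacle is this construction in \ref{lem:tree-2conn}, since it is the only place where $2$-connectivity is used. Part \ref{lem:tree-desc} is essentially bookkeeping once Edmonds' theorem is invoked.
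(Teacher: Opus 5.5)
Your proposal is correct and matches the paper's proof in parts (a)--(c) essentially step for step. It uses the same reduction of Edmonds' inequalities $x(F)\le r(F)$ to vertex-set rank inequalities via components, the same barycentre and small-displacement arguments, and the same neighbourhood argument inside the hyperplane $x(E)=n-1$.

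The only deviation is the witness tree in (d). The paper builds a path $s_1 w R$ from $s_1$ to some $s_2\in S$ with no inner vertex in $S$, extends it to a spanning tree, and argues that a spanning tree of $G[S]$ inside it would close a cycle. You instead take a spanning tree of $G-u$ plus the pendant edge $uw$, which isolates $u$ in $T\cap E[S]$. Your construction is slightly simpler and uses $2$-connectivity in exactly the same way.
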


\begin{proof}
Part (a) is Edmonds' description~\cite{edmonds1971,lovaszplummer1986} of the spanning-tree polytope.
That description uses the count equality, the edge inequalities, and $x(A) \le r(A)$ for every $A \subseteq E$, where $r(A)$ is the number of edges in a largest forest contained in $A$.
Since $r(E[S]) \le |S|-1$, those inequalities imply our rank inequalities.
Conversely, let $S_1, \ldots, S_m$ be the vertex sets of the components of the spanning subgraph of $G$ with edge set $A$, so that $r(A) = n-m$.
Every edge of $A$ lies in some $E[S_i]$, so as $x \ge 0$,
\[
  x(A) \le \sum_{i} x(E[S_i]) \le \sum_{i} (|S_i|-1) = n-m,
\]
bounding each term by $0$ if $|S_i| = 1$, by the count equality if $S_i = V$, and by a rank inequality otherwise.

For (b), let $b$ be the average of the $\iv_T$.
Suppose $x \in P^{\circ}$.
Since $b \in P$, for small $\varepsilon > 0$ the point $y = x + \varepsilon(x-b)$ lies in $P$, and $x = (y + \varepsilon b)/(1+\varepsilon)$.
Writing $y$ as a convex combination of the $\iv_T$ and $b$ with every weight $1/|\cT|$ puts a positive weight on every $\iv_T$.
Conversely, suppose $x = \sum_T \lambda_T \iv_T$ with every $\lambda_T > 0$.
Let $L = \{\sum_T \mu_T \iv_T : \sum_T \mu_T = 0\}$ be the linear subspace parallel to $\operatorname{aff} P$, so that $x \in P^{\circ}$ exactly when $x + z \in P$ for every sufficiently small $z \in L$.
The linear map $\mu \mapsto \sum_T \mu_T \iv_T$ from $\{\mu \in \R^{\cT} : \sum_T \mu_T = 0\}$ onto $L$ is surjective, so it has a linear right inverse.
Hence, every sufficiently small $z \in L$ is $\sum_T \mu_T \iv_T$ with $\sum_T \mu_T = 0$ and every $|\mu_T| < \lambda_T$, and $x + z = \sum_T (\lambda_T + \mu_T)\iv_T$ is a convex combination of the $\iv_T$, so it lies in $P$.
For the final claim, averaging the weights of $x$ and $y$ gives weights for $(x+y)/2$ that are all positive.

For (c), let $H = \{x \in \R^{E} : x(E) = n-1\}$, so $P \subseteq H$ by the count equality.
The edge and rank inequalities are finitely many and strict at $x$, so they remain strict on a neighbourhood $N$ of $x$ in $H$, and $N \subseteq P$ by (a).
Then $H = \operatorname{aff} N \subseteq \operatorname{aff} P \subseteq H$, so $N$ is a neighbourhood of $x$ in $\operatorname{aff} P$ contained in $P$, and $x \in P^{\circ}$.

For (d), write $x = \sum_T \lambda_T \iv_T$ with every $\lambda_T > 0$ by (b).
Every edge $e$ lies in some spanning tree $T$, so $x(e) \ge \lambda_T > 0$.
Now fix $S$ with $2 \le |S| \le n-1$.
Every $T$ has $\iv_T(E[S]) \le |S|-1$, so it suffices to find one $T$ for which $T \cap E[S]$ is not a spanning tree of $G[S]$.
Since $G$ is connected and $S \neq V$, some $w \notin S$ has a neighbour $s_1 \in S$.
Because $G$ is $2$-connected, $G - s_1$ is connected; let $R$ be a shortest path in $G - s_1$ from $w$ to $S \setminus \{s_1\}$, ending at $s_2$.
Then $Q = s_1 w R$ is a path from $s_1$ to $s_2$ with no inner vertex in $S$, and so no edge in $E[S]$.
Extend $Q$ to a spanning tree $T$.
If $T \cap E[S]$ were a spanning tree of $G[S]$, it would contain a path from $s_1$ to $s_2$ edge-disjoint from $Q$, and $T$ would contain a cycle.
\end{proof}

By \Cref{lem:tree-desc} and \Cref{eq:2FM}, $\Th = P \cap 2\FM$ is cut out by the count equality and the edge and rank inequalities of $P$, together with the degree inequalities of $2\FM$; we call these the \emph{defining inequalities} of $\Th$.

Devriendt~\cite{devriendt2026} described the graphs with nonnegative or positive resistance curvature in terms of polytopes.
For a positive edge-weight $\alpha$, put
\[x(e)=\alpha(e)\omega_e(\alpha)=\Pr[e\in T_\alpha],\]
that is, the probability that $e$ lies in a random $\alpha$-weighted spanning tree~\cite{kirchhoff1847}, where $T_\alpha$ is the random spanning tree with $\Pr[T_\alpha=T]\propto\prod_{f\in T}\alpha(f)$.
Then $x\in P^{\circ}$ by \Cref{lem:tree-interior}, taking every weight $\lambda_T$ proportional to $\prod_{f\in T}\alpha(f)$.
Foster's theorem~\cite{foster1949} gives $x(E)=n-1$, and $p_v(\alpha)=(2 - x(E(v)))/2$.
We modify Devriendt's construction to obtain the following.

\begin{lemma}
\label{lem:polytope}
A graph $G$ is \RN{} if and only if $P^{\circ} \cap 2\FM \neq \emptyset$, and \RP{} if and only if $P^{\circ} \cap 2\FM^{\circ} \neq \emptyset$.
\end{lemma}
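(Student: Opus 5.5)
The plan is to reduce both equivalences to one fact: the map $\alpha \mapsto x_\alpha$, with $x_\alpha(e)=\alpha(e)\omega_e(\alpha)=\Pr[e\in T_\alpha]$, sends the positive edge-weights \emph{onto} $P^{\circ}$. Together with the identity $p_v(\alpha)=(2-x_\alpha(E(v)))/2$ from Foster's theorem recalled above, this fact does all the work.

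\emph{Forward directions.} Suppose $\alpha$ is a positive weight. As noted before the statement, $x_\alpha\in P^{\circ}$ by \Cref{lem:tree-interior}, since every tree has positive weight $\lambda_T\propto\prod_{f\in T}\alpha(f)$. Every edge lies in some spanning tree, so also $x_\alpha(e)\ge\lambda_T>0$ for every $e$.

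If $p_v(\alpha)\ge0$ for all $v$, then every degree inequality $x_\alpha(E(v))\le 2$ holds. Hence $x_\alpha\in P^{\circ}\cap 2\FM$.

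If $p_v(\alpha)>0$ for all $v$, then every degree inequality is strict. The edge inequalities are also strict, so all defining inequalities of the full-dimensional polytope $2\FM$ are strict at $x_\alpha$. Hence $x_\alpha\in 2\FM^{\circ}$.

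\emph{Reverse directions.} Take $x\in P^{\circ}\cap 2\FM$, or $x\in P^{\circ}\cap2\FM^{\circ}$ in the \RP{} case. It suffices to produce a positive $\alpha$ with $x_\alpha=x$. Then $p_v(\alpha)=(2-x(E(v)))/2$ is nonnegative, or positive, at every vertex, as required. For the \RP{} case, note that a point of $2\FM^{\circ}$ satisfies every degree inequality strictly, since each such inequality is nontrivial on the full-dimensional polytope $2\FM$.

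\emph{Surjectivity onto $P^{\circ}$.} This is the main obstacle, and I would handle it by a maximum-entropy argument.

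Consider the set $\Lambda_x$ of probability vectors $\lambda\in\R^{\cT}_{\ge0}$ with $\sum_T\lambda_T\iv_T=x$. It is a nonempty compact polytope, and by \Cref{lem:tree-interior} it contains a point with all $\lambda_T>0$.

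Maximise the strictly concave entropy $H(\lambda)=-\sum_T\lambda_T\log\lambda_T$ over $\Lambda_x$.

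The first step is to show the maximiser $\lambda^*$ has full support. Suppose $\lambda^*_T=0$ for some $T$. Move from $\lambda^*$ toward the strictly positive point $\lambda'$ of $\Lambda_x$. The directional derivative of $H$ along $\lambda'-\lambda^*$ is $+\infty$, because of the $-\log\lambda_T$ terms at the vanishing coordinates, while all other terms stay bounded. This contradicts maximality.

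The second step uses that $\lambda^*$ is then an interior critical point of $H$ restricted to the affine space $\{\sum_T\lambda_T\iv_T=x\}$, on which $\sum_T\lambda_T=1$ is automatic via the count equality. The Lagrange conditions give
\[
-\log\lambda^*_T-1=\sum_{e\in T}\theta_e+c
\]
for some multipliers $\theta\in\R^E$ and $c\in\R$. Hence $\lambda^*_T\propto\prod_{e\in T}\alpha(e)$ with $\alpha(e)=e^{-\theta_e}>0$.

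Finally, with this $\alpha$, $\Pr[e\in T_\alpha]=\sum_{T\ni e}\lambda^*_T=x(e)$, so $x_\alpha=x$.

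Of these steps, the full-support step (the boundary blow-up of $-\log$) is the delicate point. The rest is bookkeeping with \Cref{lem:tree-interior} and the curvature identity.
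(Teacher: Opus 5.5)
Your proof is correct, but it takes a genuinely different route from the paper. The paper never proves that $\alpha\mapsto x_\alpha$ maps the positive weights onto $P^{\circ}$. Instead it imports Devriendt's Corollary~3.9, which already says that \RN{} (resp.\ \RP{}) is equivalent to $P^{\circ}$ meeting the closed (resp.\ open) halfspaces $x(E(v))\le 2$. The paper's proof then only does the polytope bookkeeping: the edge inequalities of $2\FM$ hold, indeed strictly, on $P^{\circ}$. So meeting the closed halfspaces is the same as meeting $2\FM$, and meeting the open ones is the same as meeting $2\FM^{\circ}$. Your forward directions, and your remark that points of $2\FM^{\circ}$ satisfy every degree inequality strictly, are exactly this bookkeeping, made explicit in both containments.

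What you add is a self-contained proof of the imported surjectivity, and your maximum-entropy argument for it is sound. The unbounded one-sided derivative of $-\lambda\log\lambda$ at $0$ forces the maximiser to have full support. The Lagrange condition on the affine space $\{\sum_T\lambda_T\iv_T=x\}$ then gives the product form $\lambda^*_T\propto\prod_{e\in T}e^{-\theta_e}$. Your observation that $\sum_T\lambda_T=1$ follows from the count equality is also correct. Your version buys independence from the citation and shows why $P^{\circ}$, rather than $P$, is the right object: it is exactly the image of the weight map. The paper's version is shorter and keeps the argument purely polyhedral.

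One small attribution point: $p_v(\alpha)=(2-x_\alpha(E(v)))/2$ is immediate from the definitions of $\omega_e$ and $p_v$. Foster's theorem only supplies $x_\alpha(E)=n-1$, which your argument never uses.
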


\begin{proof}
Let $\mathcal{H}_v = \{x\in \R^E : x(E(v))\le 2\}$ and $\mathcal{H}_v^\circ = \{x\in \R^E : x(E(v))< 2\}$.
By~\cite[Corollary~3.9]{devriendt2026}, a graph $G$ is \RN{} (respectively \RP{}) if and only if $P^\circ$ and the halfspaces $\mathcal{H}_v$ (respectively open halfspaces $\mathcal{H}_v^\circ$) have a common intersection point.
Because the edge inequalities of $2\FM$ are redundant over $P$, the intersection of $P^{\circ}$ with the halfspaces $\mathcal{H}_v$ is exactly the set $P^{\circ}\cap 2\FM$.

Every edge lies in some spanning tree, so by \Cref{lem:tree-interior} the edge inequalities of $P^{\circ}$ are strict.
Since $2\FM$ is full-dimensional, the edge inequalities of $2\FM^{\circ}$ are also strict.
Inheriting the strict edge inequalities from $P^{\circ}$ and the strict degree inequalities from $\mathcal{H}_v^{\circ}$ yields precisely $P^{\circ}\cap 2\FM^{\circ}$.
\end{proof}

The \RN{} half is~\cite[Theorem~4.1]{devriendt2026}, stated there for $2M$; the two forms agree by the above remark.

A defining inequality of $\Th$ is an \emph{implicit equality} if it holds with equality at every point of $\Th$, following~\cite{schrijver1986}.
We say a vertex $v$ is \emph{slackless} if the degree inequality at $v$ is an implicit equality, that is, if $x(E(v)) = 2$ for every $x \in \Th$. 
We write $U = \{v \in V : x(E(v)) = 2 \text{ for all } x \in \Th\}$ for the \emph{slackless set} of $G$.
If $\Th=\emptyset$, we set $U=V$.
A strict degree inequality is a property of a single point of $\Th$, while membership in $U$ is a property of all of $\Th$ and records which of the degree inequalities become implicit equalities.

\begin{theorem}
\label{thm:implicit}
Let $G$ be a graph with $\Th \neq \emptyset$.
Then
\begin{enumerate}[label=(\alph*), ref=\thetheorem(\alph*)]
  \item\label[theorem]{thm:implicit-rn} $G$ is $2$-connected and \RN{} if and only if none of the edge and rank inequalities of \Cref{lem:tree-desc} is an implicit equality;
  \item\label[theorem]{thm:implicit-rp} $G$ is \RP{} if and only if $G$ is \RN{} and $U = \emptyset$.
\end{enumerate}

In particular, a $2$-connected \RN{} graph is strictly \RN{} if and only if $U \neq \emptyset$.
\end{theorem}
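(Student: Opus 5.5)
The plan is to read everything through \Cref{lem:polytope} and the standard fact from polyhedral theory~\cite{schrijver1986}: for a nonempty polyhedron given by finitely many inequalities, there is a point at which every inequality that is not an implicit equality holds strictly. Equivalently, the relative interior of $\Th$ is the set of points of $\Th$ satisfying all non-implicit inequalities strictly. We apply this to $\Th$ with its defining inequalities, namely the count equality, the edge, rank and degree inequalities. This gives a point $x^*\in\Th$ that is strict in every edge and rank inequality that is not implicit, and strict in the degree inequality at every $v\notin U$.

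For (a), forward direction, suppose $G$ is $2$-connected and \RN{}. By \Cref{lem:polytope} there is some $x\in P^{\circ}\cap 2\FM\subseteq\Th$. By \Cref{lem:tree-2conn} this $x$ satisfies every edge and rank inequality strictly, so none of them is an implicit equality of $\Th$.

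Conversely, suppose no edge or rank inequality is implicit. Then $x^*$ satisfies all of them strictly, so $x^*\in P^{\circ}$ by \Cref{lem:tree-strict}, and $x^*\in 2\FM$. Hence $G$ is \RN{} by \Cref{lem:polytope}. For $2$-connectivity, suppose $G$ had a cut vertex $c$. Take a component $C$ of $G-c$ and set $S=C\cup\{c\}$ and $S'=V\setminus C$. Every spanning tree restricts to spanning trees of both $G[S]$ and $G[S']$, so $\iv_T(E[S])=|S|-1$ for all $T$. Thus the rank inequality at $S$ is tight on all of $P\supseteq\Th$, and it is implicit since $\Th\neq\emptyset$. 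Here $2\le|S|\le n-1$ needs $|C|\ge1$ and $S'\ne\{c\}$, which hold. If $G$ has no cut vertex but $n\le 2$, the rank inequalities are vacuous; for $n\ge3$ connected and cut-vertex-free means $2$-connected. The degenerate case $n=2$ would need the edge inequality of a bridge; since $n\ge 3$ is assumed throughout, I ignore it.

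For (b), suppose first that $G$ is \RP{}. Then some $x\in P^{\circ}\cap 2\FM^{\circ}\subseteq\Th$ has $x(E(v))<2$ at every $v$, so $U=\emptyset$, and $G$ is \RN{} trivially. Conversely, suppose $G$ is \RN{} with $U=\emptyset$. Take $y\in P^{\circ}\cap 2\FM$ from \Cref{lem:polytope} and $x^*$ as above, which is strict in every degree inequality because $U=\emptyset$. Set $z=(x^*+y)/2$. Then $z\in P^{\circ}$ by \Cref{lem:tree-interior}, and $z(E(v))<2$ for all $v$ because the degree constraints are linear, with one strict and one weak. Its edge values are positive since $z\in P^{\circ}$. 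Hence $z\in P^{\circ}\cap 2\FM^{\circ}$, and $G$ is \RP{}.

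The final sentence follows from (b): a $2$-connected \RN{} graph has $\Th\ne\emptyset$, so it is strictly \RN{} exactly when it is not \RP{}, which by (b) means $U\neq\emptyset$.

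The main obstacle is only the justification of the existence of $x^*$. I would either cite Schrijver or average, over all non-implicit inequalities, points of $\Th$ witnessing their strictness; convexity makes the average strict in each of them. A second point of care is the cut-vertex argument, namely checking that the tight set $S$ really lies in the admissible range $2\le|S|\le n-1$.
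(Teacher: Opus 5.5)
Your proof is correct and follows essentially the same route as the paper. The paper also builds its points by averaging witnesses of strictness, once for the edge and rank inequalities in (a) and once for the degree inequalities in (b), and then uses \Cref{lem:tree-strict}, \Cref{lem:tree-2conn}, and the midpoint step of \Cref{lem:tree-interior} exactly as you do. There are only two cosmetic differences: you package both averagings into a single relative-interior point $x^*$ of $\Th$, and you certify the implicit rank inequality with $S=C\cup\{c\}$ for a component $C$ of $G-c$, whereas the paper uses a block of $G$.
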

\begin{proof}
\begin{enumerate}[label=(\alph*)]
  \item
    For the forward direction, let $G$ be $2$-connected and \RN{} and, by \Cref{lem:polytope}, pick $x \in P^{\circ} \cap 2\FM$.
    By \Cref{lem:tree-2conn}, $x(e) > 0$ for every $e$ and $x(E[S]) < |S|-1$ for every $S$ with $2\le |S| \le n-1$.
    Hence, every edge and rank inequality holds strictly at $x$ and so none is an implicit equality.

    Conversely, suppose no edge or rank inequality is an implicit equality.
    For each edge inequality, select a point $x^e\in \Th$ satisfying the strict inequality $x^e(e)>0$.
    Similarly, for each rank inequality, select a point $x^S\in \Th$ satisfying $x^S(E[S])<|S|-1$.
    There are finitely many such inequalities, so let $x$ be the average of the points $x^e$ and $x^S$.
    Each $x^e$ and $x^S$ satisfies every inequality, and each inequality is strict at at least one such point; hence, $x$ satisfies each inequality strictly and lies in $\Th$ by convexity.
    Therefore, $x\in P^{\circ} \cap 2\FM$ by \Cref{lem:tree-strict} and $G$ is \RN{} by \Cref{lem:polytope}.

    Suppose $G$ were not $2$-connected.
    Then $G$ has a cut-vertex and hence at least two blocks~\cite[Section~3.1]{diestel2017}, so any block $B$ satisfies $2\le |B| \le n-1$.
    Pick any spanning tree $T$ of $G$.
    Then the forest $T\cap E[B]$ is connected; a $T$-path between two vertices of $B$ that left $B$ would leave and re-enter through the same cut-vertex, which a path cannot do.
    Hence, $T\cap E[B]$ is a spanning tree of $G[B]$ and $\iv_T(E[B])=|B|-1$ by~\cite[Corollary~1.5.3]{diestel2017}.
    Since the rank inequality corresponding to $B$ holds with equality at every vertex of $P$, it is implicit on all of $P$ and hence on $\Th$, a contradiction.
    Therefore, $G$ is $2$-connected.

  \item
    For the forward direction, let $G$ be \RP{} and so \RN{}.
    Then \Cref{lem:polytope} gives $x\in P^{\circ} \cap 2\FM^{\circ}$, so $x(E(v))<2$ for all $v\in V$; hence, $U=\emptyset$.

    Conversely, let $G$ be \RN{} with $U = \emptyset$.
    For each $v \in V$, choose $x^v \in \Th$ with $x^v(E(v)) < 2$, possible exactly because $v \notin U$.
    Let $x^0$ be the average of the points $x^v$.
    Each $x^v$ satisfies all degree inequalities and at least one strictly, so by convexity $x^0\in \Th$ and $x^0(E(v))<2$ for all $v\in V$.
    Since $G$ is \RN{}, \Cref{lem:polytope} lets us pick $x^1\in P^{\circ} \cap 2\FM$.
    Set $x=(x^0+x^1)/2$.
    Then $x \in P^{\circ}$ by \Cref{lem:tree-interior}, and $x(E(v))<2$ at every vertex because $x^0(E(v))<2$ and $x^1 \in 2\FM$.
    Therefore, $x\in P^{\circ} \cap 2\FM^{\circ}$, so $G$ is \RP{} by \Cref{lem:polytope}.
\end{enumerate}
\end{proof}

\section{Cut-sets}
\label{sec:cutsets}

We show in \Cref{lem:cutset} that a cut-set $S$ with $c(G-S) > |S|$ obstructs resistance curvature.
In particular, every \RP{} graph is $1$-tough.
Applying \Cref{lem:cutset} to the smaller part of a bipartite graph recovers a result of Devriendt in \Cref{lem:bip}.

\begin{lemma}
\label{lem:cutset}
Let $G$ be connected, $S \subseteq V$, and $c = c(G-S) \ge 2$.
Then
\[
  \sum_{v \in S} x(E(v)) \ge |S| + c - 1 \qquad \text{for every } x \in \Th.
\]
Consequently, if $c = |S|+1$ then $S \subseteq U$ and $G$ is not \RP{}; and if $c \ge |S|+2$ then $\Th = \emptyset$ and $G$ is not \RN{}.
\end{lemma}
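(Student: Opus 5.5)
Let $C_1,\dots,C_c$ be the vertex sets of the components of $G-S$. The plan is to bound from above the total weight $x$ puts on edges not meeting $S$, and then compare with the count equality. Every edge of $G$ either lies in some $E[C_i]$, or has at least one endpoint in $S$. Write $F = E \setminus \bigcup_i E[C_i]$ for the edges meeting $S$. For $x \in \Th \subseteq P$ we have $x(E[C_i]) \le |C_i|-1$: if $|C_i|\ge 2$ this is a rank inequality of \Cref{lem:tree-desc} (note $|C_i|\le n-1$ since $S\cup\bigcup_{j\ne i}C_j$ is nonempty because $c\ge 2$), and if $|C_i|=1$ then $E[C_i]=\emptyset$ and the bound $0\le 0$ is trivial. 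Summing and using the count equality $x(E)=n-1$ with $n = |S|+\sum_i|C_i|$ gives
\[
  x(F) = n-1-\sum_i x(E[C_i]) \ge n-1-\sum_i(|C_i|-1) = |S|+c-1 .
\]

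Next, compare $x(F)$ with the degree sums over $S$. Each edge of $F$ has at least one endpoint in $S$, and edges with both endpoints in $S$ are counted twice in $\sum_{v\in S}x(E(v))$; since $x\ge 0$ (edge inequalities), $\sum_{v\in S}x(E(v)) = x(F)+x(E[S]) \ge x(F)$. Combining yields the displayed inequality for every $x\in\Th$.

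For the consequences, use the degree inequalities $x(E(v))\le 2$ defining $2\FM$: for $x\in\Th$, $2|S|\ge\sum_{v\in S}x(E(v))\ge |S|+c-1$, i.e.\ $c\le |S|+1$ whenever $\Th\neq\emptyset$. Hence if $c\ge|S|+2$ then $\Th=\emptyset$, so $P^\circ\cap 2\FM=\emptyset$ and $G$ is not \RN{} by \Cref{lem:polytope}. If $c=|S|+1$ the chain is tight: $\sum_{v\in S}x(E(v))=2|S|$ with each term at most $2$, forcing $x(E(v))=2$ for all $v\in S$ and all $x\in\Th$, so $S\subseteq U$ (and if $\Th=\emptyset$ then $U=V\supseteq S$ by convention). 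Since $S$ is nonempty as a cut-set of a connected graph, $U\ne\emptyset$, so $G$ is not \RP{}: either by \Cref{thm:implicit-rp} when $\Th\ne\emptyset$, or directly by \Cref{lem:polytope} when $\Th=\emptyset$ (then $P^\circ\cap 2\FM^\circ\subseteq\Th$ is empty).

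I expect no serious obstacle. The only care needed is the edge bookkeeping in the first step, namely that every edge outside $\bigcup_i E[C_i]$ meets $S$ (there are no edges between distinct components $C_i$, $C_j$), together with the validity range $2\le|C_i|\le n-1$ of the rank inequalities.
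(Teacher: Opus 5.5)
Your proof is correct, but it proves the main inequality by a different route from the paper. The paper works at the vertices of $P$. For a spanning tree $T$ it contracts each component of $G-S$ to a point. The image of $T$ is then a connected spanning subgraph of a multigraph on $|S|+c$ vertices, and its edges are exactly the edges of $T$ meeting $S$. So there are at least $|S|+c-1$ such edges, and the bound passes to $P$, hence to $\Th$, by convexity.

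You instead work directly with the inequality description of \Cref{lem:tree-desc}, combining the count equality with the rank inequalities $x(E[C_i])\le |C_i|-1$ on the components of $G-S$. This is the same bookkeeping the paper postpones to \Cref{eq:parts,eq:split} in the proof of \Cref{thm:extremal}.

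The paper's version needs only that $P$ is the convex hull of the $\iv_T$, not Edmonds' description, and is a purely combinatorial count. Yours relies on Edmonds' theorem, but it yields the exact slack identity
\[
  \sum_{v\in S}x(E(v))-(|S|+c-1)=x(E[S])+\sum_i\bigl(|C_i|-1-x(E[C_i])\bigr)
\]
for every $x$ with $x(E)=n-1$. This identity is precisely what \Cref{thm:extremal} exploits in the extremal case $c=|S|+1$ to force $E[S]=\emptyset$ and singleton components. With your argument, that theorem could be read off directly instead of being recomputed.

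For the consequences you agree with the paper. Your appeal to \Cref{thm:implicit-rp}, together with the convention $U=V$ when $\Th=\emptyset$, is a valid alternative to the paper's direct observation: a degree inequality tight on all of $\Th\supseteq P^{\circ}\cap 2\FM^{\circ}$ forces $P^{\circ}\cap 2\FM^{\circ}=\emptyset$.
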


\begin{proof}
We prove the inequality for every $x \in P$, first at the vertices $\iv_T$ and then by convexity.
Let $T$ be a spanning tree of $G$ and let $F$ be its edges meeting $S$.
Let $G'$ be the multigraph obtained by contracting the components of $G-S$, and $T'$ the image of $T$.
Each component becomes a single vertex, so $G'$ has exactly $|S|+c$ vertices.
Edges of $T$ with both endpoints in one component of $G-S$ become loops and are discarded.
Every other edge of $T$ meets $S$, so $E(T') = F$.
Since $T'$ is a connected spanning subgraph of $G'$, it has at least $|S|+c-1$ edges, so $|F| \ge |S|+c-1$.

We now pass from trees to the polytope.
Summing the degrees $\iv_T(E(v))$ over $S$ counts an edge of $T$ with both endpoints in $S$ twice and one with a single endpoint in $S$ once, so
\[
  \sum_{v \in S} \iv_T(E(v)) \ge |F| \ge |S|+c-1.
\]
The inequality is linear and $P$ is the convex hull of the $\iv_T$, so it holds on $P$ and hence on $\Th\subseteq P$.

For the two consequences, let $x \in \Th$ be arbitrary.
The degree inequalities bound the same sum from above, so
\[
  |S|+c-1 \le \sum_{v \in S} x(E(v)) \le 2|S|.
\]
If $c=|S|+1$ the two bounds agree, and each of the $|S|$ terms is at most $2$, so $x(E(v))=2$ for every $v \in S$.
Since $x$ was arbitrary, $S \subseteq U$; the degree inequality at a vertex of the nonempty set $S$ is tight at every point of $\Th \supseteq P^{\circ} \cap 2\FM^{\circ}$, so $G$ is not \RP{} by \Cref{lem:polytope}.

If instead $c\ge |S|+2$ the lower bound exceeds the upper bound, a contradiction.
Hence, $\Th=\emptyset$, so $P^{\circ} \cap 2\FM = \emptyset$ and $G$ is not \RN{} by \Cref{lem:polytope}.
\end{proof}

\begin{lemma}[{\cite[Proposition~3.5]{devriendt2026}}]
\label{lem:bip}
Let $G$ be connected and bipartite with parts $A$ and $B$, where $|A| \le |B|$.
If $|B| = |A|+1$ then $A \subseteq U$, and in particular $G$ is not \RP{}; if $|B| > |A|+1$ then $G$ is not \RN{}.
In either case, the toughness satisfies $\tau \le |A|/|B| < 1$.
\end{lemma}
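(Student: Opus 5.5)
The plan is to apply \Cref{lem:cutset} with $S = A$, the smaller part. Since $G$ is bipartite, $G - A$ has no edges, so its components are exactly the singletons of $B$. Because $G$ is connected, $B$ is nonempty and $A$ is nonempty; one needs $|B| \ge 2$ so that $c(G-A) \ge 2$. If $|B| = 1$, then $|A| \le 1$ and $G$ is $K_2$ or $K_1$, which is excluded by $n \ge 3$. Since $|A| \le |B|$ and $n = |A|+|B| \ge 3$, we get $|B| \ge 2$, and therefore $c = c(G-A) = |B| \ge 2$.

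If $|B| = |A|+1$, then $c = |A|+1$, and the first consequence of \Cref{lem:cutset} gives $A \subseteq U$ and $G$ not \RP{}. If $|B| > |A|+1$, then $c \ge |A|+2$, and the second consequence gives $\Th = \emptyset$ and $G$ not \RN{}.

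For the toughness, $A$ is a cut-set with $c(G-A) = |B| \ge 2$, so by the definition of $\tau$ as a minimum we get $\tau \le |A|/|B|$. Both cases assume $|B| > |A|$, so this ratio is below $1$. The only step needing care is the degenerate small cases, which the standing assumption $n \ge 3$ handles.
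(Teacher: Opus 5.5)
Your proposal is correct and follows the paper's proof: apply \Cref{lem:cutset} at $S=A$ with $c(G-A)=|B|$ to get both cases, then use $A$ as a cut-set to get $\tau \le |A|/|B| < 1$. The only minor difference is that you obtain $c(G-A)\ge 2$ from the standing assumption $n\ge 3$, whereas the paper uses $|B|>|A|\ge 1$ in both cases; either justification works.
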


\begin{proof}
Every edge of $G$ has an endpoint in $A$, so $G-A$ consists of the $|B|$ isolated vertices of $B$ and $c(G-A)=|B|$.
In both cases below $|B| > |A| \ge 1$, so $c(G-A) \ge 2$ and \Cref{lem:cutset} applies at $S=A$ with $c=|B|$.
If $|B|=|A|+1$ then $c=|S|+1$, and \Cref{lem:cutset} gives $A \subseteq U$, so $G$ is not \RP{}.
If $|B|>|A|+1$ then $c \ge |S|+2$, and \Cref{lem:cutset} gives $\Th=\emptyset$, so $G$ is not \RN{}.
In either case, $A$ is a cut-set with $c(G-A)=|B|>|A|$, so $\tau \le |A|/|B| < 1$.
\end{proof}

\section{Slackless vertices}
\label{sec:slack}

Under the hypothesis that $G$ is \RN{} and $2$-connected, \Cref{thm:extremal} sharpens \Cref{lem:cutset}.
A cut-set $S$ with $c(G-S)=|S|+1$ is a part of a bipartition of $G$, and the components of $G-S$ are isolated vertices.
\Cref{thm:extremal} cannot be applied at $S=U$ directly, because nothing yet gives $c(G-U)=|U|+1$.
Instead, the dual certificate of \Cref{lem:cert} lets \Cref{lem:sat} obtain the bipartite structure from $U$ itself, and $c(G-U)=|U|+1$ follows as a consequence.

\begin{theorem}
\label{thm:extremal}
Let $G$ be $2$-connected and \RN{}, and let $S \subseteq V$ with $c = c(G-S) \ge 2$.
Then $c \le |S|+1$, and in the extremal case $c = |S|+1$, both $S$ and $V \setminus S$ are independent, every component of $G-S$ is a single vertex, and $n = 2|S|+1$.
\end{theorem}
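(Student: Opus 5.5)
Since $G$ is \RN{}, $\Th\neq\emptyset$, so the first claim $c\le |S|+1$ is immediate from \Cref{lem:cutset}. The plan for the extremal case is to push the equality $\sum_{v\in S}x(E(v))=2|S|$ down from a single well-chosen point of $\Th$ to \emph{every} spanning tree. Then $2$-connectivity (through \Cref{lem:tree-2conn}) rules out any non-singleton component of $G-S$.

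First, assume $c=|S|+1$. Note $S\neq\emptyset$: otherwise $c(G)=c\ge 2$, contradicting connectivity. By \Cref{lem:polytope}, pick $x\in P^{\circ}\cap 2\FM\subseteq\Th$. By \Cref{lem:tree-interior}, write $x=\sum_T\lambda_T\iv_T$ with every $\lambda_T>0$. The proof of \Cref{lem:cutset} gives $\sum_{v\in S}\iv_T(E(v))\ge |S|+c-1=2|S|$ for each tree $T$. The degree inequalities give $\sum_{v\in S}x(E(v))\le 2|S|$. Since $x$ is a strictly positive convex combination, every tree $T$ must satisfy $\sum_{v\in S}\iv_T(E(v))=2|S|$ exactly. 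Hence both inequalities in the chain $\sum_{v\in S}\iv_T(E(v))\ge |F|\ge |S|+c-1$ are equalities for every $T$.

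Second, I read off the structure.
\begin{enumerate}
  \item Equality in the first step means no edge of $T$ has both endpoints in $S$. If some edge $e\in E[S]$ existed, extending $\{e\}$ to a spanning tree gives a contradiction. So $S$ is independent.
  \item Equality $|F|=|S|+c-1$ means that, for each $T$, the number of edges of $T$ lying inside components $C_1,\dots,C_c$ of $G-S$ is $n-1-|F|=n-|S|-c=\sum_i(|C_i|-1)$. Since $T\cap E[C_i]$ is a forest, it has at most $|C_i|-1$ edges. So equality holds for each $i$, and $T\cap E[C_i]$ spans $G[C_i]$.
  \item Therefore $x(E[C_i])=|C_i|-1$. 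If some $|C_i|\ge 2$, then since $|C_i|\le n-1$ (as $S\neq\emptyset$), this is a tight rank inequality at $x\in P^{\circ}$. That contradicts \Cref{lem:tree-2conn}.
\end{enumerate}
So every component is a single vertex, $V\setminus S$ is independent, and $n=|S|+c=2|S|+1$.

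The main obstacle is step 2: converting the bound $|F|=|S|+c-1$ into spanning-ness inside each component. This requires the edge count to split exactly across components, which the forest bound handles. The rest is bookkeeping from the equality case of \Cref{lem:cutset}.
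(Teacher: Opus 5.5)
Your proof is correct and uses the same idea as the paper: equality in the cut-set bound of \Cref{lem:cutset} forces the edge inequalities on $E[S]$ and the rank inequalities of the non-singleton components of $G-S$ to be tight, which is impossible at points of $P^{\circ}$ when $G$ is $2$-connected. The difference is in execution. You push the equality down to every spanning tree through the strictly positive weights of \Cref{lem:tree-interior}, read the structure off tree by tree, and contradict \Cref{lem:tree-2conn} at a single point; the paper instead splits the count equality over the components to obtain $c\le |S|+1-x(E[S])$ at every $x\in\Th$, then invokes \Cref{thm:implicit-rn} through implicit equalities.
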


\begin{proof}
Let $C_1,\ldots,C_c$ be the vertex sets of the components of $G-S$.
Since $G$ is \RN{}, \Cref{lem:polytope} gives $\Th \supseteq P^{\circ} \cap 2\FM \neq \emptyset$.
Hence, $c \le |S|+1$ by \Cref{lem:cutset}, and it remains to treat the extremal case, so suppose $c = |S|+1$.
Fix any $x\in \Th$ and split the count equality of \Cref{lem:tree-desc} according to the components:
\begin{equation}\label{eq:parts}
n-1=x(E[S]) +\sum_{C_i}x(E[C_i])+ \sum_{C_i}x(E(S,C_i)).
\end{equation}
If any $C_i$ is a singleton, no rank inequality applies and the term $x(E[C_i])$ contributes nothing to the sum, and likewise contributes $|C_i|-1=0$ to $n-|S|-c$.
Bounding all other $x(E[C_i])$ by the rank inequality of \Cref{lem:tree-desc} gives
\begin{equation}\label{eq:split}
\begin{split}
n-1&\le x(E[S]) +\sum_{|C_i|\ge 2}(|C_i|-1) + \sum_{C_i}x(E(S,C_i))\\
&= x(E[S])+n-|S|-c+\sum_{C_i}x(E(S,C_i)).
\end{split}
\end{equation}

Summing $x(E(v))$ over $S$ counts an edge of $E[S]$ twice and an edge with one endpoint in $S$ once, so
\[
  \sum_{v\in S}x(E(v)) = 2x(E[S]) + \sum_{C_i}x(E(S,C_i)).
\]

Substituting into \Cref{eq:split} and bounding $\sum_{v\in S}x(E(v))$ by the degree inequalities of \Cref{eq:2FM} gives
\begin{equation}\label{eq:chain}
\begin{split}
n-1&\le x(E[S])+n-|S|-c+\sum_{v\in S}x(E(v)) - 2x(E[S])\\
&\le n-|S|-c+2|S|-x(E[S])\\
&=n+|S|-c-x(E[S]),
\end{split}
\end{equation}
which rearranges to
\begin{equation}\label{eq:end}
c\le |S|+1-x(E[S]).
\end{equation}
Since $c = |S|+1$, \Cref{eq:end} gives $x(E[S]) \le 0$, and each $x(e) \ge 0$ by \Cref{lem:tree-desc}, so $x(E[S]) = 0$.
All three of \Cref{eq:split,eq:chain,eq:end} are therefore equalities at every $x \in \Th$.
The edge inequality at every edge of $E[S]$ then holds with equality on all of $\Th$, and equality in \Cref{eq:split} holds term by term, so $x(E[C_i]) = |C_i|-1$ on all of $\Th$ for every $C_i$ with $|C_i| \ge 2$.
Each $|C_i| \le n-1$ because $c \ge 2$, so these are rank inequalities, and each is an implicit equality.
\Cref{thm:implicit-rn} rules that out for a $2$-connected \RN{} graph, so $E[S] = \emptyset$, the set $S$ is independent, and no $C_i$ has $|C_i| \ge 2$.
Each $C_i$ is therefore a single vertex, so $G-S$ consists of $|S|+1$ isolated vertices, $V \setminus S$ is independent, and $n = |S| + (|S|+1) = 2|S|+1$.
\end{proof}

Slacklessness is a property of the entire polytope: the degree inequality at each $v\in U$ is tight at every point of $\Th$.
Linear programming duality converts this into an identity among the constraint functions of the defining inequalities of $\Th$.
We use the following affine form of Farkas' lemma.

\begin{lemma}[Affine Farkas {\cite[Corollary~7.1h]{schrijver1986}}]
\label{lem:farkas}
Let $Q = \{x \in \R^d : g_i(x) \ge 0\ (i \in I),\ h(x) = 0\}$ be nonempty, with $I$ finite and all $g_i, h$ affine, and let $f$ be affine with $f \ge 0$ on $Q$.
Then there are $\lambda_i \ge 0$, $\mu \in \R$, and $\eta \ge 0$ with
\[
  f = \sum_i \lambda_i g_i + \mu h + \eta
\]
identically on $\R^d$.
\end{lemma}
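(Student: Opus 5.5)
We would deduce this from strong linear programming duality. Write each affine function through its linear part and constant: $g_i(x) = a_i^{\top}x - b_i$, $h(x) = c^{\top}x - \gamma$ and $f(x) = w^{\top}x + w_0$. The goal is then to find $\lambda_i \ge 0$ and $\mu \in \R$ with
\[
  w = \sum_i \lambda_i a_i + \mu c,
\]
and to check that the constant remainder $\eta = f - \sum_i \lambda_i g_i - \mu h$, which is automatically a constant once the linear parts agree, is nonnegative.

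First consider the linear program of minimising $w^{\top}x$ over $Q$. Here $Q$ is a nonempty polyhedron, and $w^{\top}x \ge -w_0$ on $Q$ because $f \ge 0$ there, so the program is feasible and bounded below. By the fundamental theorem of linear programming, it attains its minimum at some $x^* \in Q$. Strong duality, applied to the primal with constraints $a_i^{\top}x \ge b_i$ and $c^{\top}x = \gamma$, then supplies an optimal dual solution. This consists of multipliers $\lambda_i \ge 0$ for the inequalities and a free multiplier $\mu$ for the equality, satisfying the identity for $w$ displayed above. It also satisfies complementary slackness, $\lambda_i g_i(x^*) = 0$ for every $i$.

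With these multipliers, $\eta$ is a constant, so we may evaluate it at $x^*$. There $h(x^*) = 0$ and every term $\lambda_i g_i(x^*)$ vanishes, so $\eta = f(x^*) \ge 0$. This gives the required identity $f = \sum_i \lambda_i g_i + \mu h + \eta$ on all of $\R^d$.

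The main obstacle is the appeal to attainment of the optimum and to strong duality, including the free sign of $\mu$ for the equality constraint. We would treat both as standard, and cite Schrijver as the statement does. To keep the argument self-contained, one could instead homogenise. Adjoin a variable $t \ge 0$ and consider the cone of pairs $(x,t)$ with $t \ge 0$, $a_i^{\top}x - b_i t \ge 0$ and $c^{\top}x - \gamma t = 0$. One would show that $w^{\top}x + w_0 t \ge 0$ on this cone, which requires handling recession directions with $t = 0$, using that $Q$ is nonempty. The homogeneous Farkas lemma then produces the multipliers, with $\eta$ equal to the coefficient of the constraint $t \ge 0$.
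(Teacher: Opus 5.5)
Your argument is correct. Strong duality supplies $(\lambda,\mu)$ matching the linear parts, and complementary slackness at the primal optimum $x^*$ gives $\eta=f(x^*)\ge 0$. The paper gives no proof of its own and only cites Schrijver's Corollary~7.1h, which is the standard consequence of LP duality obtained in essentially this way (with $h=0$ split into $h\ge 0$ and $-h\ge 0$, the difference of whose multipliers is your free $\mu$).
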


\begin{lemma}
\label{lem:cert}
Let $G$ be $2$-connected and \RN{} with $U\neq \emptyset$.
Then there are weights $a_v \ge 1$ at each slackless vertex $v \in U$ and one constant $\mu \in \R$ such that every edge of $G$ collects total weight exactly $\mu$ from its slackless endpoints,
\begin{equation}\label{eq:star}
  \sum_{v \in e \cap U} a_v = \mu \qquad \text{for every edge } e \in E,
\end{equation}
and such that the weights are normalised against the size of $G$ by
\begin{equation}\label{eq:starstar}
  \mu (n-1) = 2\sum_{v \in U} a_v.
\end{equation}
\end{lemma}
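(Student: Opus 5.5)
The plan is to apply the affine Farkas lemma (\Cref{lem:farkas}) to $Q=\Th$ with the test function
\[
  f(x)=\sum_{v\in U}\bigl(x(E(v))-2\bigr).
\]
This $f$ vanishes identically on $\Th$ by the definition of $U$, and in particular $f\ge 0$ there. The constraints of $\Th$ are the ones listed after \Cref{lem:tree}: the edge functions $x(e)$, the rank functions $|S|-1-x(E[S])$, and the degree functions $2-x(E(w))$, all taken $\ge 0$, together with the single equality $h(x)=x(E)-(n-1)=0$. Note that $\Th\ne\emptyset$ because $G$ is \RN{}. \Cref{lem:farkas} then yields multipliers $\lambda_e,\lambda_S,\lambda_w\ge 0$, a constant $\mu'\in\R$ and $\eta\ge 0$ with
\[
  f=\sum_e\lambda_e x(e)+\sum_S\lambda_S\bigl(|S|-1-x(E[S])\bigr)+\sum_w\lambda_w\bigl(2-x(E(w))\bigr)+\mu' h+\eta
\]
identically on $\R^E$.

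The key step is to kill the unwanted multipliers by evaluating this identity at a well-chosen point. I will construct $x^*\in\Th$ at which every edge inequality, every rank inequality, and the degree inequality at every $w\notin U$ hold strictly. By \Cref{thm:implicit-rn}, since $G$ is $2$-connected and \RN{}, no edge or rank inequality is an implicit equality. By definition of $U$, for each $w\notin U$ some point of $\Th$ has $x(E(w))<2$. Averaging one witness point for each of these finitely many inequalities gives such an $x^*$, exactly as in the proof of \Cref{thm:implicit}. At $x^*$ we have $f(x^*)=0$ and $h(x^*)=0$, so a sum of nonnegative terms equals zero. Hence $\eta=0$, every $\lambda_e=0$, every $\lambda_S=0$, and $\lambda_w=0$ for every $w\notin U$. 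The terms $\lambda_v(2-x^*(E(v)))$ with $v\in U$ vanish anyway.

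What remains is the identity
\[
  \sum_{v\in U}\bigl(x(E(v))-2\bigr)=\sum_{v\in U}\lambda_v\bigl(2-x(E(v))\bigr)+\mu'\bigl(x(E)-(n-1)\bigr).
\]
Comparing the coefficient of $x(e)$ for a fixed edge $e$ gives
\[
  |e\cap U|=-\sum_{v\in e\cap U}\lambda_v+\mu' ,
\]
that is, $\sum_{v\in e\cap U}(1+\lambda_v)=\mu'$. Setting $a_v=1+\lambda_v\ge 1$ and $\mu=\mu'$ yields \Cref{eq:star}. Comparing constant terms gives
\[
  -2|U|=2\sum_{v\in U}\lambda_v-\mu'(n-1),
\]
which rearranges to $\mu(n-1)=2\sum_{v\in U}(1+\lambda_v)$, and this is \Cref{eq:starstar}.

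The only step I expect to need care is producing the single point $x^*$ that is simultaneously strict on all non-implicit inequalities. This relies on $2$-connectivity through \Cref{thm:implicit-rn}, and on the observation that a convex average of points of $\Th$ preserves any strictness held by at least one of them. Everything after that is coefficient matching.
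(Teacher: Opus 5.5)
Your proof is correct and follows the paper's argument: the same Farkas certificate for $f=\sum_{v\in U}\bigl(x(E(v))-2\bigr)$ on $\Th$, with the edge, rank, and non-$U$ degree multipliers and $\eta$ killed via \Cref{thm:implicit-rn} and the definition of $U$, then coefficient matching with $a_v=1+\lambda_v$. The only cosmetic difference is that you evaluate the identity at a single averaged point strict on every non-implicit inequality, whereas the paper evaluates it at each $x\in\Th$ and uses that $x$ was arbitrary.
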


\begin{proof}
The graph $G$ is \RN{}, so $\Th \supseteq P^{\circ} \cap 2\FM$ is nonempty by \Cref{lem:polytope}.
Fix the objective function $f(x)=\sum_{v\in U}\left(x(E(v))-2\right)$.
Each term vanishes on $\Th$ by definition of $U$, so $f$ is identically $0$ on that domain.
In particular $f \ge 0$ on $\Th$, so \Cref{lem:farkas} gives coefficients $\lambda_e, \lambda_v, \lambda_S \ge 0$ for the edge, degree, and rank inequalities, respectively, a constant $\mu \in \R$ for the count equality, and a constant $\eta \ge 0$, such that
\begin{equation}\label{eq:decomp}
\begin{split}
  f(x) = \sum_{e \in E} \lambda_e x(e)
    &+ \sum_{v \in V} \lambda_v \left(2 - x(E(v))\right) \\
    &+ \sum_{2 \le |S| \le n-1} \lambda_S \left(|S| - 1 - x(E[S])\right) \\
    &+ \mu \left(x(E) - (n-1)\right) + \eta
\end{split}
\end{equation}
identically on $\R^{E}$.

Fix $x \in \Th$.
Then $f(x) = 0$ and the count equality holds at $x$, so \Cref{eq:decomp} equates $0$ with $\eta$ plus the terms $\lambda_i g_i(x)$ over the constraints $g_i$.
Each $\lambda_i g_i(x)$ is nonnegative, as is $\eta$, and a sum of nonnegative terms vanishes only if every term does, so $\eta = 0$ and $\lambda_i g_i(x) = 0$ for every $i$.
Since $x \in \Th$ was arbitrary, $\lambda_i = 0$ unless $g_i$ is an implicit equality.
\Cref{thm:implicit-rn} shows that no edge or rank inequality is an implicit equality for a $2$-connected \RN{} graph, so $\lambda_e = 0$ and $\lambda_S = 0$ for every $e$ and every $S$.
The degree inequality at $v$ is an implicit equality exactly when $v\in U$, so $\lambda_v = 0$ when $v \in V \setminus U$.
Collecting the remaining terms,
\begin{equation}\label{eq:balance}
  \sum_{v \in U} (1 + \lambda_v) \left(2 - x(E(v))\right) = \mu \left((n-1) - x(E)\right)
\end{equation}
identically on $\R^{E}$.
Set $a_v = 1 + \lambda_v$ for $v \in U$; these satisfy $a_v \ge 1$ because $\lambda_v \ge 0$.
Comparing the coefficient of $x(e)$ on the two sides gives \Cref{eq:star}, and comparing constant terms gives \Cref{eq:starstar}.
\end{proof}

\begin{lemma}
\label{lem:sat}
Let $G$ be $2$-connected and \RN{}, with slackless set $U$.
If $U \neq \emptyset$ then $U$ is an independent vertex cover of $G$.
Consequently $n\ge 5$, and $G$ is bipartite with parts $U$ and $V \setminus U$ of sizes $|U|=(n-1)/2$ and $|V\setminus U|=(n+1)/2$, respectively.
\end{lemma}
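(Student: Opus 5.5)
The plan is to extract the structure directly from the dual certificate of \Cref{lem:cert}, which supplies weights $a_v \ge 1$ on $U$ and a constant $\mu$ satisfying \Cref{eq:star,eq:starstar}.

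\emph{Step 1: $U$ is a vertex cover and $\mu > 0$.} Since $U \neq \emptyset$ and $G$ is connected with $n \ge 3$, some edge meets $U$, so by \Cref{eq:star} $\mu \ge 1 > 0$. If some edge had no endpoint in $U$, then \Cref{eq:star} would give $\mu = 0$ for that edge, a contradiction. Hence every edge meets $U$, and $V \setminus U$ is independent.

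\emph{Step 2: $U \neq V$.} If $U = V$, then summing $x(E(v)) = 2$ over all $v$ at any $x \in \Th$ gives $2x(E) = 2n$. This contradicts the count equality $x(E) = n-1$, and $\Th \neq \emptyset$ because $G$ is \RN{}.

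\emph{Step 3: $U$ is independent.} This is the step I expect to be the crux. Split $U$ into $U' = \{v \in U : a_v = \mu\}$ and $U'' = U \setminus U'$.
\begin{itemize}
  \item If $u \in U$ has a neighbour outside $U$, then \Cref{eq:star} gives $a_u = \mu$, so $u \in U'$. Hence every neighbour $w$ of a vertex $u \in U''$ lies in $U$.
  \item For such an edge, $a_w = \mu - a_u$. Then $a_u \ge 1$ gives $a_w < \mu$, so $w \in U''$.
  \item Thus $U''$ is closed under taking neighbours, and by connectivity $U'' = \emptyset$ or $U'' = V$. The latter would force $U = V$, which Step 2 rules out.
  \item So $a_v = \mu$ for all $v \in U$. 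An edge inside $U$ would then give $2\mu = \mu$, impossible since $\mu > 0$.
\end{itemize}
Therefore $U$ is an independent vertex cover, and $G$ is bipartite with parts $U$ and $V \setminus U$.

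\emph{Step 4: sizes.} With all $a_v = \mu$, \Cref{eq:starstar} becomes $\mu(n-1) = 2|U|\mu$, so $|U| = (n-1)/2$ and $|V \setminus U| = (n+1)/2$. In particular $n$ is odd. If $n = 3$, then the only $2$-connected graph on three vertices is $K_3$, which is not bipartite. Hence $n \ge 5$.
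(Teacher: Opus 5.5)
Your proof is correct and is essentially the paper's argument: the certificate of \Cref{lem:cert} gives $\mu \ge 1$ and hence the vertex cover, independence of $U$ follows by showing a subset of $U$ is closed under adjacency and then using connectivity together with $U \neq V$, and \Cref{eq:starstar} gives the part sizes. The only cosmetic difference is that the paper's closed set is the vertices of $U$ with a neighbour in $U$, while yours is $\{v \in U : a_v < \mu\}$.
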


\begin{proof}
Take $a_v \ge 1$ and $\mu$ as in \Cref{lem:cert}.
Fix $u \in U$ and an edge $e$ incident to it, which exists because $G$ is $2$-connected.
By \Cref{eq:star}, $\mu = \sum_{v \in e \cap U} a_v \ge a_u \ge 1$.
Every edge $e$ has an endpoint in $U$; otherwise, $\sum_{v \in e \cap U} a_v = 0 = \mu$, contradicting $\mu \ge 1$.
Thus, $U$ is a vertex cover and $V \setminus U$ is independent.

Next, we show that $U$ is independent.
Let $U'$ be the vertices of $U$ with a neighbour in $U$.
Suppose some $u \in U'$ has a neighbour $w \in V \setminus U$, and let $u'$ be a neighbour of $u$ in $U$.
Applying \Cref{eq:star} to $uu'$ and $uw$ and taking the difference, $0 = (a_u + a_{u'}) - a_u = a_{u'}$, contradicting $a_{u'} \ge 1$.
So, every neighbour of a vertex of $U'$ lies in $U$, and such a neighbour has a neighbour in $U'$, so it lies in $U'$ itself.
Hence, $U'$ is closed under adjacency and is a union of components of $G$.
Because $G$ is \RN{}, \Cref{lem:polytope} gives some $x \in P^{\circ} \cap 2\FM \subseteq \Th$.
If $V \setminus U$ were empty then $x(E(v)) = 2$ at every vertex, and summing over $V$ counts each edge twice, giving $2x(E) = 2n$ against the count equality $2x(E) = 2(n-1)$.
So $V \setminus U$ is nonempty.
Since $G$ is connected, its only component is $V$, which is not contained in $U' \subseteq U$.
Therefore, $U' = \emptyset$.

Every edge meets exactly one vertex of $U$, so \Cref{eq:star} gives $a_v = \mu$ for every $v \in U$.
Substituting into \Cref{eq:starstar} gives $\mu(n-1) = 2|U|\mu$.
Since $\mu\ge 1$, we divide to obtain $|U| = (n-1)/2$ and $|V\setminus U| = (n+1)/2$.
No bipartite graph on fewer than four vertices is $2$-connected, so $n \ge 4$; since $n = 2|U|+1$ is odd, this forces $n \ge 5$ and $|U| \ge 2$.
\end{proof}

\begin{remark}
Neither the hypothesis that $G$ be $2$-connected nor the hypothesis that $G$ be \RN{} can be dropped from \Cref{lem:sat}.
The path $P_4$ is not $2$-connected, and it is \RN{} because its only spanning tree is itself, so $\Th = \{\iv_E\}$ and every vertex has degree at most $2$.
Its slackless set consists of its two adjacent centre vertices.

Take now the $2$-connected graph $K_{2,3}+uw$, with the additional edge connecting the two vertices $u,w$ in the first part of the bipartition.
The polytope $\Th$ is nonempty because it contains the incidence vectors of the Hamiltonian paths of $K_{2,3}$.
By the count equality, $x(E(u))+x(E(w))=x(E)+x(uw)=4+x(uw)$.
The degree inequalities give $4+x(uw)\le 4$, so with nonnegativity $x(uw)=0$ and $x(E(u))=x(E(w))=2$ for every $x\in\Th$; that is, $u,w$ are slackless.
Thus, $K_{2,3}+uw$ is not \RN{}; otherwise, \Cref{thm:implicit-rn} gives some $x\in\Th$ with $x(uw)>0$, a contradiction.
Writing $a,b,c$ for the other part, the Hamiltonian paths $a\,u\,b\,w\,c$ and $b\,u\,a\,w\,c$ give each of $a,b,c$ degree $1$ at some point of $\Th$, so $U=\{u,w\}$.
This set is indeed a vertex cover but is not independent.
\end{remark}

\section{Consequences}
\label{sec:cons}

In this section, we derive three theorems from \Cref{thm:extremal} and \Cref{lem:sat}.
In particular, \Cref{thm:char} classifies all $2$-connected strictly \RN{} graphs as bipartite \RN{} graphs with parts whose sizes differ by one.
Alongside the paths $P_n$, this describes all connected strictly \RN{} graphs.
\Cref{thm:band} then shows that all $1$-tough \RN{} graphs are \RP{}.
Finally, \Cref{thm:tough} quantifies the toughness that an \RN{} graph may attain below $1$ as $k/(k+1)$ for $k\ge 1$, and shows that every such value is attained.

\thmchar*

\begin{proof}
First, suppose (a).
Then, since $G$ is \RN{} but not \RP{}, \Cref{thm:implicit-rp} gives $U \neq \emptyset$ and hence (b).
Next, suppose (b).
Since $U\neq \emptyset$, \Cref{lem:sat} gives that $G$ is bipartite with parts $U$ of size $k\ge 2$ and $V\setminus U$ of size $k+1$, so (c) holds.
Lastly, suppose (c).
The parts differ in size by one, so \Cref{lem:bip} applies at the smaller part and gives that $G$ is not \RP{}; hence, (a) holds.

It remains to compute $\tau$.
By (b) and \Cref{lem:sat}, $U$ is the part of size $k$.
The set $U$ satisfies $c(G-U)=|V\setminus U|=k+1$, so $\tau \le k/(k+1)$.
Let $S$ be a cut-set attaining $\tau$, so that $c(G-S)=|S|/\tau$.
Because $\tau < 1$, we have $c(G-S)>|S|$, so $c(G-S)\ge |S|+1$.
By \Cref{thm:extremal}, the reverse inequality holds, so $S$ is in the extremal case $c(G-S)=|S|+1$ and both $S$ and $V\setminus S$ are independent.
\Cref{thm:extremal} additionally yields that $n=2|S|+1$, so $|S|=k$ and $\tau=k/(k+1)$ exactly.
A connected bipartite graph has a unique bipartition up to swapping the parts, since the part containing a vertex is determined by the parity of its distance from any fixed vertex.
The bipartition $U$, $V \setminus U$ therefore coincides with $S$, $V \setminus S$, and $|S| = k = |U|$ forces $S = U$, so $U$ is the only cut-set attaining $\tau$.
\end{proof}

\thmband*

\begin{proof}
The forward direction is due to Fiedler~\cite[Theorem~3.4.18]{fiedler2011}, as applied by Devriendt~\cite[Theorem~1.3]{devriendt2026}.
For an alternate argument, suppose $G$ is \RP{} and not $1$-tough.
Then there exists a cut-set $S$ with $c(G-S)\ge |S|+1$.
\Cref{lem:cutset} then implies $G$ is not \RP{}, a contradiction, so $G$ is $1$-tough.

For the reverse, suppose $G$ is \RN{} and $1$-tough.
Every $1$-tough graph on $n \ge 3$ vertices is $2$-connected by~\cite[Proposition~1.3]{chvatal1973}, so \Cref{thm:char} applies.
Suppose $G$ were not \RP{}.
Then \Cref{thm:char} gives $\tau \le (n-1)/(n+1) < 1$, contradicting $1$-toughness.
Therefore, $G$ is \RP{}.
\end{proof}

\thmtough*

\begin{proof}
The only \RN{} graphs that are not $2$-connected are the paths $P_n$ with $n \ge 3$~\cite[Proposition~3.7]{devriendt2026}, which have $\tau = 1/2 = k/(k+1)$ for $k = 1$.
Now let $G$ be $2$-connected with $\tau < 1$, so that $G$ is not $1$-tough.
By \Cref{thm:band}, $G$ is not \RP{} and hence strictly \RN{}, so \Cref{thm:char} gives $n = 2k+1$ and $\tau = k/(k+1)$ for some $k \ge 2$.

All that remains is to show that the $2$-connected graph $K_{k,k+1}$ is \RN{} for $k\ge 2$.
Let $x=\frac{1}{|\cT|}\sum_{T\in \cT}\iv_T$ be the average of the spanning-tree indicators.
Since $x$ is a convex combination of every vertex of $P$ with every weight positive, $x\in P^{\circ}$ by \Cref{lem:tree-interior}.

Every automorphism $\sigma$ of $K_{k,k+1}$ permutes $\cT$, so $\sigma x = x$.
Let $A$ and $B$ be the parts of sizes $k$ and $k+1$.
Every vertex of $A$ is adjacent to every vertex of $B$, so any permutation of $V$ fixing $A$ setwise is an automorphism.
Given edges $ab$ and $a'b'$ with $a,a' \in A$ and $b,b' \in B$, the permutation exchanging $a$ with $a'$ and $b$ with $b'$ is therefore an automorphism sending $ab$ to $a'b'$.
Under the induced action on $\R^E$, therefore, $x(ab) = x(a'b')$, and $x$ is constant on $E$.

The count equality $x(E)=n-1=2k$ gives $x(e) = 2k/(k(k+1)) = 2/(k+1)$ for every edge $e$.
The maximum degree of any vertex in $K_{k,k+1}$ is $k+1$, achieved on any vertex of the smaller part, so $2-x(E(v))\ge 2-(k+1)\cdot 2/(k+1)= 0$.
Therefore, $x\in 2\FM$.
Thus, $x \in P^{\circ} \cap 2\FM$, so $K_{k,k+1}$ is \RN{} by \Cref{lem:polytope}.
It is bipartite and \RN{}, so \Cref{thm:char-bip} applies and $K_{k,k+1}$ is strictly \RN{} with toughness $k/(k+1)$.
\end{proof}

\section{Open questions}
\label{sec:open}

\Cref{thm:char} describes the strictly \RN{} graphs as the $2$-connected bipartite \RN{} graphs with parts of sizes $k$ and $k+1$ for some $k \ge 2$, and so reduces their classification to deciding which such bipartite graphs are \RN{}.
Expanding the definitions turns that into a question about the smaller part.
Let $G$ be connected and bipartite with parts $A$ of size $k \ge 1$ and $B$ of size $k+1$.
Every edge of $G$ has exactly one endpoint in each part, so each of the two sums below counts every edge once and every $x \in P$ satisfies
\[
  \sum_{v \in A} x(E(v)) = \sum_{v \in B} x(E(v)) = x(E) = n-1 = 2k.
\]
If $x \in P$ has $x(E(v)) \le 2$ for every $v \in A$, then the $k$ terms of the first sum are each at most $2$ and total $2k$, so $x(E(v)) = 2$ for every $v \in A$.
That is, the degree inequalities on the smaller part are tight as soon as they hold at all.
By \Cref{lem:polytope}, $G$ is \RN{} exactly when some $x \in P^{\circ}$ satisfies $x(E(v)) \le 2$ at every vertex.
With the tightness above, this says that $G$ is \RN{} if and only if some $x \in P^{\circ}$ has $x(E(v)) = 2$ for every $v \in A$ and $x(E(v)) \le 2$ for every $v \in B$.
The paths $P_{2k+1}$ and the graphs $K_{k,k+1}$ qualify; the subdivided star obtained by identifying one endpoint of each of three paths $P_3$, with parts of sizes $3$ and $4$, does not.

\begin{question}
\label{q:bip}
Which connected bipartite graphs with parts of sizes $k$ and $k+1$, for some $k \ge 1$, are \RN{}?
\end{question}

Independently, Guo, Sun, and Yang~\cite{guosunyang2026} study the same polytope $\Th$, characterising its vertices by full-rank systems of tight constraints~\cite[Theorem~3.2]{guosunyang2026} and showing that \RN{}, \RP{}, and strict \RN{} are each recognisable in polynomial time~\cite[Theorem~2.1]{guosunyang2026}.
With \Cref{thm:char}, the graphs of \Cref{q:bip} are therefore recognisable in polynomial time.
We ask instead for a structural description of them.

By \Cref{thm:band}, the two toughness thresholds of \Cref{sec:intro} coincide but remain undetermined; the bounds $3/2\le t^*_{\RN{}}\le t^*_{\RP{}}\le 5$ recorded there are the best known.

\begin{question}
\label{q:threshold}
Determine $t^* = t^*_{\RN{}} = t^*_{\RP{}}$.
\end{question}

\section*{Acknowledgements}
During the completion of this work, the author was partially supported by NSF RTG Grant DMS-2231492.
The author used Anthropic's Claude Opus 5 in developing and proofreading this work.
In response to the author's question of whether every $1$-tough \RN{} graph is \RP{} (answered by \Cref{thm:band}), Claude suggested preliminary forms of \Cref{thm:extremal} and \Cref{lem:sat} and outlined a proof strategy via LP duality.
The author wrote all proofs in this paper, correcting and completing that outline, and takes full responsibility for the content of this paper.
The author thanks Zhiyu Wang, Christin Bibby, Shea Vela-Vick, and James Oxley for helpful discussions and proofreading.
Thanks also to Gyaneshwar Agrahari, Sean Boros, and
Fernando Heidercheidt for their contributions to previous work.

\bibliographystyle{alpha}
\bibliography{references}

\end{document}